\newtheorem{lemma}{Lemma}[section]
\newtheorem{theorem}[lemma]{Theorem}
\newtheorem{corollary}[lemma]{Corollary}
\newtheorem{proposition}[lemma]{Proposition}
\theoremstyle{definition}
\newtheorem{definition}[lemma]{Definition}
\newtheorem{remarks}[lemma]{Remarks}
\newtheorem{example}[lemma]{Example}
\def\N{\mathbb{N}}
\def\C{\mathcal{C}}
\def\Q{\mathbb{Q}}
\def\Aut{{\rm Aut}}
\def\c{\bar{c}}
\def\a{\bar{a}}
\def\acl{{\rm acl}}
\def\Sym{{\rm Sym}}
\def\A{\mathcal{A}}
\def\vphi{\varphi}
\def\vvphi{\tilde{\vphi}}
\def\Ind{{\rm Ind}}
\providecommand{\norm}[1]{\lVert#1\rVert}
\def\eps{\varepsilon}
\def\sub{\subseteq}
\newcommand{\eq}{^{\mathrm{eq}}}
\newcommand{\cl}[2][]{\overline{#2}^{#1}}
\begin{document}

\title[]{Free actions of free groups on countable structures and property (T)}

\authors{

\author{David M. Evans}

\address{%
Department of Mathematics\\
Imperial College London\\
London SW7~2AZ\\
UK.}

\email{david.evans@imperial.ac.uk}

%

\author{Todor Tsankov}

\address{%
Universit\'e Paris 7\\
UFR de Math\'ematiques, Case 7012\\
75205 Paris \textsc{cedex} 13\\
France.}

\email{todor@math.univ-paris-diderot.fr}

}

\subjclass[2010]{Primary 22A25, 20B27; Secondary 03C15}
\thanks{The second author was partially supported by the ANR grant GrupoLoco (ANR-11-JS01-008).}

\date{}

\begin{abstract}  We show that if $G$ is a non-archimedean, Roelcke precompact, Polish group, then $G$ has Kazhdan's property (T). Moreover, if $G$ has a smallest open subgroup of finite index, then $G$ has a finite Kazhdan set. Examples of such $G$ include automorphism groups of countable $\omega$-categorical structures, that is, the closed, oligomorphic permutation groups on a countable set. The proof uses work of the second author on the unitary representations of such groups, together with a separation result for infinite permutation groups. The latter allows the construction of a non-abelian free subgroup of $G$ acting freely in all infinite transitive permutation representations of $G$.
\end{abstract}

\maketitle

\section{Introduction}

\subsection{Main results}
A topological group $G$ is \textit{non-archimedean} if it has a base of open neighbourhoods of the identity consisting of subgroups. The symmetric group $\Sym(X)$ on a set $X$, consisting of the group of all permutations of $X$, equipped with the topology of pointwise convergence, is an example of such a group: pointwise stabilizers of finite sets form a base of open neighbourhoods of the identity. It is well-known that a Polish group $G$ is non-archimedean if and only if it is isomorphic to a closed  subgroup of $\Sym(X)$ for some countable $X$. Moreover, such groups are exactly automorphism groups of first-order structures on $X$.

A group $G \leq \Sym(X)$ is said to be \textit{oligomorphic} (in its action on $X$) if $G$ has only finitely many orbits on $X^n$, for all $n \in \N$ (where the action on $X^n$ is the diagonal action). Such  groups have been extensively studied from the point of view of infinite permutation groups, combinatorics, model theory, and topological dynamics (see, for example, the references \cite{Cameron1990}, \cite{Macpherson2011a} and \cite{Kechris2014}). They arise as automorphism groups of $\omega$-categorical structures and model-theoretic methods produce a wide variety of examples of these. 

In \cite{Tsankov2012}, the second author studied the unitary representations of oligomorphic permutation groups, showing that they are completely reducible and giving a description of the irreducible representations. Many of the results of \cite{Tsankov2012} hold under a weaker (and more intrinsic) assumption than that of being an oligomorphic group: that of \textit{Roelcke precompactness} (see Definition \ref{15} here). For $G \leq \Sym(X)$ this means that whenever $Y$ is a union of finitely many $G$-orbits, then $G$ acts oligomorphically on $Y$ (see Lemma~\ref{16}). We note that in the interesting cases, Roelcke precompact groups are not locally compact: more precisely, a Roelcke precompact topological group is locally compact iff it is compact.

Using this description of the unitary representations, the paper \cite{Tsankov2012} shows that Kazhdan's property (T) holds for a natural class of closed, oligomorphic permutation groups $G \leq \Sym(X)$ (\cite{Tsankov2012}*{Theorem~6.6}; for the definition of property (T), see Definition 4.2 here). Furthermore, \cite{Tsankov2012}*{Theorem~6.7} gives some examples --- including the automorphism groups of the rational ordering $(\Q, \leq)$ and the random graph ---  of such groups with \textit{strong} property (T), where the Kazhdan set can be taken to be finite. In the latter case, the proof proceeds by finding a non-abelian free subgroup of $G$ which acts freely on $X$, an idea that goes back to Bekka~\cite{Bekka2003}. We use a similar method here, combined with techniques from permutation group theory, to prove the following general result.

\begin{theorem}\label{11} Suppose that $G$ is a non-archimedean, Roelcke precompact, Polish group and  $G^\circ$ is the intersection of the open subgroups of finite index in $G$. Then $G$ and $G^\circ$ have Kazhdan's property (T) and $G^\circ$ has a finite Kazhdan set. 
\end{theorem}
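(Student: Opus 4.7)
The plan is to combine the representation-theoretic results of \cite{Tsankov2012} with a permutation-theoretic construction of a uniformly free subgroup, extending the strategy of \cite{Bekka2003} for locally compact groups to the non-archimedean Roelcke precompact setting.

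\emph{Step 1: reduce to regular representations on infinite coset spaces.}
By the classification of unitary representations of $G$ obtained in \cite{Tsankov2012}, such representations are completely reducible, and every irreducible is induced from a finite-dimensional representation of an open subgroup. Any representation of $G^\circ$ without invariant vectors then lies, up to weak containment, in a direct sum of subrepresentations of $G$-representations of the form $\ell^2(G/V)$ with $V \leq G$ open and $[G:V] = \infty$. A uniform spectral gap on this class of representations will therefore produce a finite Kazhdan set for $G^\circ$.

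\emph{Step 2: construct a non-abelian free subgroup acting freely on every infinite coset space (the main obstacle).}
The genuinely new ingredient to be built is a pair $s_1, s_2 \in G^\circ$ generating a free subgroup $F$ whose action on each coset space $G/V$, for every open $V \leq G$ with $[G:V] = \infty$, is free. Since there are only countably many such $V$, the strategy is a back-and-forth: enumerate them and, at each stage, control longer initial segments of the reduced words in $s_i^{\pm 1}$. The key technical lemma, the ``separation result'' flagged in the abstract, should say that any prescribed non-trivial reduced word can, after a local finite extension of $s_1$ and $s_2$, be forced to displace every coset of every previously listed infinite-index $V$. Roelcke precompactness is used to reduce each stage to an oligomorphic action on a union of finitely many orbits, where the infinitude of $G/V$ provides enough room to place images correctly; a bit of extra care, working inside each open subgroup of finite index in turn, keeps $s_1, s_2$ inside $G^\circ$. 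I expect this combinatorial construction to be where the heaviest work lies.

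\emph{Step 3: spectral gap via Kesten and the profinite quotient.}
Let $Q = \{s_1^{\pm 1}, s_2^{\pm 1}\}$. For each infinite-index open $V$, freeness of the $F$-action on $G/V$ makes $\ell^2(G/V)|_F$ a multiple of the left regular representation $\lambda_F$ of $F \cong F_2$. Kesten's norm bound yields
\[
\Bigl\| \tfrac{1}{4}\sum_{s \in Q} \lambda_F(s) \Bigr\| = \tfrac{\sqrt{3}}{2} < 1,
\]
and this bound transfers to each $\ell^2(G/V)$. By Step 1 the same spectral gap holds on every unitary representation of $G^\circ$ with no invariant vectors, which means $Q$ is a Kazhdan set for $G^\circ$ and in particular $G^\circ$ has property (T). Finally, $G/G^\circ$ is profinite, hence compact, and therefore has property (T); since extensions of Kazhdan groups by Kazhdan groups are Kazhdan, $G$ itself has property (T), completing the proof.
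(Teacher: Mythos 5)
Your proposal follows essentially the same route as the paper: reduce via the classification of unitary representations to quasi-regular representations $\ell^2(G^\circ/H)$ on infinite coset spaces, build a free subgroup of $G^\circ$ acting freely on all of them by a back-and-forth argument resting on a Neumann's-Lemma-type separation statement, apply Kesten's theorem for the spectral gap, and finish by passing through the quotient $G/G^\circ$ and the extension property of (T). Two remarks. First, your Step~3 asserts that $G/G^\circ$ is profinite, hence compact, as if this were automatic; it is not (for $G = \mathbb{Z}$ discrete, $G^\circ$ is trivial and the quotient is not compact). The paper proves compactness of $G/G^\circ$ using Roelcke precompactness together with a Baire category argument showing that any continuous homomorphism from $G$ to a compact Polish group has closed image; some such argument is genuinely needed at this point. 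Second, your Step~2 proposes to enumerate the infinite-index open subgroups $V$ and control words stage by stage; the paper instead works once and for all on a single universal countable $G$-set $X(G)$ into which every coset space $G/V$ embeds, builds $f,g$ as unions of partial automorphisms whose domains are algebraic closures of finite sets, and maintains as an invariant that \emph{no} reduced word in the partial maps has a fixed point, the inductive step being secured by the separation lemma (any algebraic closure can be moved off any other over their common base). Your sketch of a finite extension forcing a prescribed word to ``displace every coset of every previously listed $V$'' cannot be taken literally, since a finite partial map only constrains the word where it is defined; the invariant formulation is what makes the induction close up. With those adjustments your outline matches the paper's proof.
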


While so far property (T) has found most of its applications in the realm of locally compact groups, we note that there are some interesting consequences in our setting as well. Combining Theorem~\ref{11} with the results of Glasner and Weiss~\cite{Glasner1997}, we obtain the following.
\begin{corollary} \label{c:Bauer}
  Let $G$ be a non-archimedean, Roelcke precompact group and $G \curvearrowright X$ a continuous action on a compact Hausdorff space $X$. Then the simplex of $G$-invariant measures on $X$ is a \emph{Bauer simplex}, i.e., the set of its extreme points is closed.
\end{corollary}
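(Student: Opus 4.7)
The plan is to combine Theorem~\ref{11} with the main theorem of Glasner--Weiss from \cite{Glasner1997}. By Theorem~\ref{11}, the group $G$ has Kazhdan's property (T), and the Glasner--Weiss result asserts that whenever a topological group with property (T) acts continuously on a compact Hausdorff space, the set of ergodic invariant Radon probability measures is closed in the weak-$*$ topology. Since the extreme points of the simplex of $G$-invariant probability measures on $X$ are precisely the ergodic ones, closedness of the ergodic measures is exactly the statement that this simplex is Bauer.

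First I would recall the Glasner--Weiss argument. Take a sequence $(\mu_n)$ of ergodic $G$-invariant measures converging weak-$*$ to some invariant $\mu$, and assume for contradiction that $\mu$ is not ergodic. Then the Koopman representation of $G$ on $L^2_0(X, \mu)$ of mean-zero functions has a non-zero invariant vector $f$. Approximating $f$ by a continuous mean-zero function on $X$ and exploiting weak-$*$ convergence $\mu_n \to \mu$, one produces, for any identity neighbourhood $V \subseteq G$ and any $\eps > 0$, $(V, \eps)$-almost invariant unit vectors of mean zero in $L^2_0(X, \mu_n)$ for all sufficiently large $n$. Property (T) then forces the existence of a genuine non-zero invariant vector in $L^2_0(X, \mu_n)$, contradicting the ergodicity of $\mu_n$.

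The only substantive point to verify is that the Glasner--Weiss argument, stated in \cite{Glasner1997} for countable discrete groups, transfers to the non-archimedean Polish setting. This is essentially a matter of bookkeeping: the proof is purely representation-theoretic, relying on continuity of the Koopman representation associated with a continuous action on a compact space and the abstract definition of property (T) via almost-invariant vectors, both of which make sense for any topological group. Hence beyond Theorem~\ref{11}, a routine check of continuity is all that is required, and this is what I expect to be the only minor obstacle.
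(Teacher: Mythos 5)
Your proposal is correct and follows essentially the same route as the paper: apply Theorem~\ref{11} to get property (T) and invoke the Glasner--Weiss argument, noting that their proof (which they state only for locally compact groups, not just countable discrete ones) carries over verbatim to arbitrary topological groups since it only uses the Koopman representation and the abstract definition of property (T). The paper adds one pointer you omit --- that the general form of the ergodic decomposition theorem needed is in Phelps~\cite{Phelps2001} --- but otherwise the arguments coincide.
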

The extreme points of the simplex of invariant measures are exactly the ergodic measures (a measure $\mu$ is \emph{ergodic} if every $\mu$-invariant measurable set $A \sub X$ is null or co-null; a set $A$ is \emph{$\mu$-invariant} if for every $g \in G$, $\mu(A \,\triangle\, gA) = 0$). While Glasner and Weiss only state their theorem for locally compact groups, the proof works equally well in general; see the book of Phelps~\cite{Phelps2001} for the general version of the ergodic decomposition theorem needed in the proof.

We also note that while amenable locally compact groups with property (T) must be compact, this is not necessarily true in our setting. Indeed, there are a number of automorphism groups of $\omega$-categorical structures that are amenable: for example, this is true if the structure has the so-called \emph{Hrushovski property}, see \cite{Kechris2007a} for more details. Amenability is relevant for Corollary~\ref{c:Bauer} because it ensures that the simplex of invariant measures is non-empty for any $G$-flow $X$.

Apart from the description of the representations of non-archimedean, Roelcke precompact, Polish groups given in \cite{Tsankov2012} (see Theorem~\ref{12} here), the main ingredient in the proof is the following, which is the main contribution of the current paper.

\begin{theorem} \label{13} Suppose $G$ is a non-archimedean, Roelcke precompact Polish group and $G^\circ$ is the intersection of the open subgroups of finite index in $G$. Suppose $G^\circ \neq \{1_G\}$. Then there exist $f,g \in G^\circ$ which generate a (non-abelian) free subgroup $F$ of $G^\circ$ with the property that if $H \leq G$ is open and of infinite index, then $F$ acts freely on the coset space $G/H$.
\end{theorem}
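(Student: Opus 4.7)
The plan is to realize $G$ as $\Aut(M)$ for a countable structure $M$, translate the conclusion into a statement about the action of $G^\circ$ on imaginary sorts $M\eq$, and construct $f, g$ by a back-and-forth / Baire category argument. The main technical ingredient will be a separation lemma for the infinite permutation representations of $G$.

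Open subgroups of $G$ are precisely stabilizers of elements of $M\eq$, and those of infinite index correspond to imaginaries with infinite $G$-orbit, which I will call \emph{generic}. An element $w \in G$ fixes a coset in $G/H$ if and only if $w$ fixes some $G$-conjugate of the imaginary $e$ associated with $H$, so the theorem reduces to producing $f, g \in G^\circ$ generating a free group $F$ such that no nontrivial $w \in F$ fixes any generic imaginary. Roelcke precompactness (which, via Lemma~\ref{16}, gives oligomorphicity on each finite union of orbits) combined with countability of $M$ ensures that the $G$-orbits of generic imaginaries form a countable family.

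I would enumerate all pairs $(w, e)$ with $w$ a nontrivial reduced word in two letters and $e$ a representative of a $G$-orbit of generic imaginaries, and construct $f, g$ as limits of an increasing chain of finite partial isomorphisms of $M$, arranging at stage $n$ that (i) the $n$-th constraint $w_n(f, g)(e_n) \neq e_n$ is satisfied, (ii) no reduced word of length $\leq n$ in the approximations collapses to the identity on the partial data (giving freeness at the limit), and (iii) the partial data only touches tuples with infinite $G$-orbit (ensuring the limits lie in $G^\circ$). Each condition defines an open subset of $G^\circ \times G^\circ$, so Baire category applies provided each is also dense.

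The main obstacle is the extension step yielding density. Given partial data for $f, g$, a word $w$, and a generic imaginary $e$, one must extend within $G^\circ$ so that $w(f, g)(e) \neq e$. The argument rests on a separation lemma combining two ideas. First, $G^\circ$ acts with all orbits infinite on the $G$-orbit of any generic imaginary: a finite $G^\circ$-orbit would yield an open finite-index subgroup of $G^\circ$, which, by a lifting argument using the definition of $G^\circ$ together with Roelcke precompactness, extends to an open finite-index subgroup of $G$ not containing $G^\circ$, contradicting the definition of $G^\circ$. Second, given this infiniteness and the homogeneity properties of $M$ (themselves a consequence of Roelcke precompactness), one can route the computation of $w(f, g)$ applied to $e$ through \emph{fresh} elements of the orbit, choosing an extension of the outermost occurrence of $f$ or $g$ so that the final value lands outside $\{e\}$. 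Carrying this out uniformly across all reduced words, while simultaneously maintaining the freeness and $G^\circ$-membership bookkeeping, is the key permutation-group contribution flagged in the introduction.
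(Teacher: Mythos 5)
Your overall strategy matches the paper's: pass to a universal action on imaginaries, reduce the theorem to fixed-point-freeness of nontrivial words on the non-algebraic points, show $G^\circ$ has only infinite orbits there, and build $f,g$ by a back-and-forth using a Neumann-type separation lemma. However, the heart of the theorem is precisely the step you defer (``carrying this out uniformly across all reduced words\dots is the key contribution''), and the framework you set up for it does not work as stated. The sets $U_{w,e}=\{(f,g): w(f,g)e\neq e\}$ are open but \emph{not} dense in the space of extensions of a given finite condition: a finite partial isomorphism chosen at stage $n$ without constraints can already force $w(f,g)x=x$ for some pair $(w,x)$ appearing later in your enumeration (e.g.\ it may already contain $x\mapsto y\mapsto x$ inside the partial data for $f$ and $g$), and then no further extension repairs this. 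So you cannot treat the constraints one at a time by Baire category; you must instead maintain, as a single inductive invariant at every stage, that \emph{no} reduced word in the current partial automorphisms has \emph{any} fixed point, and prove one extension proposition preserving this invariant. That proposition is the actual content of the proof (Proposition~\ref{210} in the paper), and proving it forces two further choices you have not made: the domains and images of the partial automorphisms must be taken to be algebraic closures of finite sets (not arbitrary finite sets), so that Neumann's Lemma, in the form of Lemma~\ref{25}, can place the new image $D$ of $\vvphi\colon C\to D$ so that $D\cap(C\cup B\cup A'\cup B')=B$; and one then needs a word-rewriting argument showing that in any relation $\pi_1\cdots\pi_r x=x$ every interior occurrence of $\vvphi^{\pm1}$ already factors through the old map $\vphi^{\pm1}$, leaving only outermost occurrences to analyse. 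Your ``route through fresh elements of the orbit, adjusting the outermost occurrence'' gestures at the conclusion of that analysis but supplies neither the disjointness that makes it true nor the case analysis.

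Two smaller but genuine defects: (i) you enumerate pairs $(w,e)$ with $e$ only a \emph{representative} of each $G$-orbit of generic imaginaries, but freeness of the action on $G/H$ requires $w(f,g)x\neq x$ for every $x$ in every infinite orbit, not just one point per orbit (killing a fixed point at one representative says nothing about the others, since $f,g$ are not being conjugated); (ii) condition (iii), ``the partial data only touches tuples with infinite $G$-orbit,'' does not by itself ensure the limits lie in $G^\circ$ --- what is needed is that the limits fix $\acl(\emptyset)$ pointwise and that $G^\circ=G_{\acl(\emptyset)}$ in the universal action, which is Lemma~\ref{28} and requires an argument. Your first separation idea (that $G^\circ$ has no finite orbits on the generic imaginaries) is correct in outline and corresponds to Lemmas~\ref{24} and~\ref{28}.
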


Here, recall that a group $F$ acting on a set $Y$ is acting \textit{freely} if for all non-identity $g \in F$ and $y \in Y$ we have $gy \neq y$. Thus each $F$-orbit on $Y$ is regular.

Theorems \ref{11} and \ref{13} answer Questions (1) (for non-archimedean groups) and (2) at the end of \cite{Tsankov2012}.

Recall that an action of a group $G$ on a discrete space $X$ is called \emph{amenable} if there is a $G$-invariant finitely additive measure on $X$. As a further corollary to Theorem~\ref{13}, we note:
\begin{corollary} \label{14} Suppose $G$ is a non-archimedean, Roelcke precompact Polish group which is acting continuously on the discrete space $X$. Suppose the action of $G$ on $X$ is amenable. Then $X$ contains a finite orbit.
\end{corollary}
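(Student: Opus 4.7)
The plan is to argue by contradiction using Theorem~\ref{13}. Suppose that no $G$-orbit on $X$ is finite. Since the action is continuous and $X$ is discrete, each stabilizer $G_x$ is an open subgroup of $G$; the ``no finite orbit'' assumption then means $[G:G_x] = \infty$ for every $x \in X$.

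I would first dispose of the edge case $G^\circ = \{1_G\}$, to which Theorem~\ref{13} does not apply directly. In this situation, the open finite-index normal subgroups of $G$ should form a neighborhood basis of the identity, making the two-sided uniformity on $G$ totally bounded; together with the completeness supplied by the Polish topology, this would force $G$ to be compact. But a compact group acting continuously on a discrete space has all orbits finite, contradicting our assumption. So we may assume $G^\circ \neq \{1_G\}$.

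Now Theorem~\ref{13} produces elements $f, g \in G^\circ$ generating a non-abelian free subgroup $F = \langle f, g \rangle$ which acts freely on $G/H$ for every open $H \leq G$ of infinite index. Since each $G$-orbit on $X$ is $G$-equivariantly isomorphic to some such $G/G_x$, it follows that $F$ acts freely on $X$. To conclude, I would invoke Tarski's theorem: the non-abelian free group $F \cong F_2$ admits a paradoxical decomposition $F = A_1 \sqcup A_2 \sqcup B_1 \sqcup B_2$ together with elements $h_1,\dots,h_4 \in F$ such that $h_1 A_1 \cup h_2 A_2 = F = h_3 B_1 \cup h_4 B_2$. By freeness of the $F$-action on $X$, choosing (via the axiom of choice) a transversal $T \subseteq X$ for the $F$-orbits, the sets $X_{A_i} = \{a \cdot t : a \in A_i,\, t \in T\}$ and $X_{B_i} = \{b \cdot t : b \in B_i,\, t \in T\}$ partition $X$ and satisfy $h_1 X_{A_1} \cup h_2 X_{A_2} = X = h_3 X_{B_1} \cup h_4 X_{B_2}$. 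Applied to a $G$-invariant (hence $F$-invariant) finitely additive probability measure $\mu$ on $X$, which exists by amenability of the action, this would give $1 = \mu(X) = \mu(X_{A_1}) + \mu(X_{A_2}) + \mu(X_{B_1}) + \mu(X_{B_2}) = 2$, the desired contradiction. The main obstacle is rigorously handling the edge case $G^\circ = \{1_G\}$, which lies outside the scope of Theorem~\ref{13} and requires a separate structural argument about Roelcke precompact Polish non-archimedean groups to confirm that such $G$ must be compact.
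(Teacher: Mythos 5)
Your main line of argument is exactly the paper's: assume every orbit is infinite, so every point stabilizer is an open subgroup of infinite index, invoke Theorem~\ref{13} to obtain a free action of $F = \langle f, g\rangle$ on $X$, and contradict the existence of a $G$-invariant finitely additive measure. The paper compresses the last step into ``this is impossible if the action is amenable''; your explicit Tarski-style paradoxical decomposition transported along a transversal of the $F$-orbits is a correct way of filling that in.

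The one place where you have a genuine gap is the edge case $G^\circ = \{1_G\}$, which you yourself flag. Your key assertion there --- that $G^\circ = \{1_G\}$ forces the open finite-index subgroups to form a neighbourhood basis of the identity --- is not automatic: for a general non-archimedean Polish group the intersection of the open finite-index subgroups can be trivial without those subgroups being cofinal among open subgroups (consider a countable product of copies of discrete $\mathbb{Z}$: the intersection of the open finite-index subgroups is trivial, but the open subgroup $\{0\} \times \prod_{n \geq 2} \mathbb{Z}$ contains no open subgroup of finite index). So Roelcke precompactness must enter precisely at this point, and your sketch does not say how. The paper's own treatment is shorter and avoids compactness altogether: by Lemma~\ref{28}, $G^\circ = G_{\acl(\emptyset)}$ in the action on $X(G)$, and by Lemma~\ref{24} this group has no finite orbits on $X(G) \setminus \acl(\emptyset)$; hence $G^\circ = \{1_G\}$ forces $X(G) = \acl(\emptyset)$, so every $G$-orbit on any discrete $X$ is finite and the conclusion of the corollary holds outright. (Alternatively, your compactness claim is recoverable from Lemma~\ref{l:hom-compact}, which shows $G/G^\circ$ is compact profinite, so $G^\circ = \{1_G\}$ does give $G$ compact.) In short, the proof is repairable by a citation to results already in the paper rather than by a new argument, but as written the edge case is not proved.
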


The proof of Theorem~\ref{13} rests ultimately on Neumann's Lemma (see Lemma~\ref{21}), a very general result about separating finite sets in an infinite permutation group. 

The required  consequences of this for closed, Roelcke precompact permutation groups are given in Section~2. These results can be deduced from `folklore' results in model theory, but we provide proofs of them in the language of permutation groups. Theorem~\ref{13} is proved in Section 3 along with Corollary~\ref{14}. Section~4 discusses Kazhdan's property (T) and contains the proof of Theorem~\ref{11}. 

\subsection*{Notation} Our notation for permutation groups is fairly standard. Groups act on the left. If $G$ is a group acting on  $X$ and $A \subseteq X$, then $G_{A}$ is the pointwise stabilizer $\{g \in G : ga = a \mbox{ for all } a \in A\}$. If $A = \{a\}$ is a singleton, we denote this by $G_a$. If $G$ is the automorphism group of some structure $M$ with domain $X$, then we write $G = \Aut(M)$ and use the alternative notation $\Aut(M/A)$ for $G_{A}$. We do not usually distinguish notationally between a structure and its underlying set. 

\subsection*{Acknowledgements} Both Authors thank Dugald Macpherson for helpful discussions about some of the material in this paper. The paper was completed while the Authors were participating in the trimester programme `Universality and Homogeneity' at the Hausdorff Institute for Mathematics, Bonn. We are also grateful to the anonymous referee for carefully reading the paper and making useful suggestions.

\subsection{Background}

\begin{definition} \label{15} The topological group $G$ is called \textit{Roelcke precompact} if for every open neighbourhood $U$ of the identity, there is a finite set $E$ such that $G = UEU$.
\end{definition}

 If $G$ is non-archimedean, so $G \leq \Sym(X)$ for some $X$, then $U$ in Definition~\ref{15} can be taken to be an open subgroup and the condition for Roelcke precompactness says that there are only finitely many double cosets of $U$ in $G$. In fact, if we take $\a \in X^n$ and $U = G_{\a}$ its stabilizer, then the condition says that there are finitely many $G$-orbits on $Y^2$, where $Y$ is the $G$-orbit on $X^n$ containing $\a$. Recall that a group $G$ is said to act \textit{oligomorphically} on a set $Y$ if $G$ has finitely many orbits on $Y^n$ for all $n \in \N$. The following is a restatement of \cite{Tsankov2012}*{Theorem~2.4}:

 \begin{lemma}\label{16} Suppose $G \leq \Sym(X)$. Then $G$ is Roelcke precompact if and only if whenever $Y$ is a union of finitely many $G$-orbits on $X$, then $G$ acts oligomorphically on $Y$.
 \end{lemma}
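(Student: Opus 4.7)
The plan is to convert between the topological double-coset formulation of Roelcke precompactness and finiteness of orbits on products, via a standard correspondence. For an open subgroup $W \leq G$ equal to the pointwise stabilizer of a finite tuple $\a = (a_1,\ldots,a_n) \in X^n$, the map $gW \mapsto g\a$ is a $G$-equivariant bijection from $G/W$ onto the orbit $G\a$, and for any open subgroups $W_1, W_2$ the $G$-orbits on $(G/W_1) \times (G/W_2)$ are in bijection with the double cosets $W_1 \backslash G / W_2$. Since $G$ is non-archimedean, the neighbourhood $U$ in Definition~\ref{15} can be chosen to be an open subgroup, and a short argument (replacing $W_1, W_2$ by $W_1 \cap W_2$, which is open and contained in both) upgrades the hypothesis to: for any two open subgroups $W_1, W_2 \leq G$, there are only finitely many $(W_1, W_2)$-double cosets in $G$. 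I will use this refined form throughout.

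For the forward direction, assume $G$ is Roelcke precompact and let $Y$ be a union of finitely many $G$-orbits on $X$. I would proceed by induction on $n$ to show $G$ has finitely many orbits on $Y^n$. The case $n=1$ is given. For the step, a $G$-orbit on $Y^{n+1}$ is determined by the $G$-orbit of its first $n$ coordinates (finitely many by induction), represented by some $\b$, together with the $G_\b$-orbit of its last coordinate in $Y$. Writing $Y = Ga_1 \cup \cdots \cup Ga_k$, the $G_\b$-orbits on each $Ga_i \cong G/G_{a_i}$ are in bijection with $(G_\b, G_{a_i})$-double cosets in $G$, which are finite in number by the refined Roelcke precompactness, since both $G_\b$ and $G_{a_i}$ are open.

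For the converse, let $U \leq G$ be any open subgroup. I would choose a finite tuple $\a = (a_1,\ldots,a_n) \in X^n$ with $W := G_\a \sub U$, and set $Y := Ga_1 \cup \cdots \cup Ga_n$, a finite union of $G$-orbits. By hypothesis, $G$ acts oligomorphically on $Y$, so in particular has only finitely many orbits on $Y^{2n}$. The $G$-set $(G/W)^2$ embeds $G$-equivariantly into $Y^{2n}$ via $(gW, g'W) \mapsto (g\a, g'\a)$ and hence has finitely many $G$-orbits. Translating back gives finitely many $W$-double cosets, so $G = WEW \sub UEU$ for some finite $E$, establishing Roelcke precompactness. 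The only thing requiring care is the bookkeeping between orbits, cosets, and double cosets; no substantive obstacle is expected, and the whole argument is essentially a repackaging of the orbit/stabilizer correspondence.
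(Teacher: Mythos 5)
Your proof is correct. The paper does not actually prove this lemma itself --- it is stated as a restatement of \cite{Tsankov2012}*{Theorem~2.4} --- but the double-coset/orbit correspondence you set up (double cosets of open point stabilizers versus $G$-orbits on products, plus the induction on $n$) is exactly the argument sketched in the paragraph preceding the lemma and the standard proof of the cited result, so your approach matches the intended one with no gaps.
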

 
 If $G$ is acting on $X$ as in the above, then we say that $G$ is \textit{locally oligomorphic} on $X$. Note that in this case, if $A \subseteq X$ is finite, then its pointwise stabilizer $G_{A}$ is also locally oligomorphic on $X$ (this follows easily from Roelcke precompactness). 
 
 We summarise the above as:
 
 \begin{corollary} A topological group $G$ is non-archimedean, Polish and Roelcke precompact if and only if it can be represented as a closed, locally oligomorphic subgroup of $\Sym(X)$, for countable $X$. 
 \end{corollary}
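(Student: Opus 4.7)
The plan is to assemble the corollary directly from results already cited in the introduction, so there is no substantive obstacle; the work is just to verify that the pieces fit together. I would separate the two implications.

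For the forward direction, assume $G$ is non-archimedean, Polish, and Roelcke precompact. The remark made just after the definition of non-archimedean in the introduction tells us that any Polish non-archimedean group is (topologically isomorphic to) a closed subgroup of $\Sym(X)$ for some countable set $X$; concretely, one picks a countable base $\{U_n\}_{n \in \N}$ of open subgroups at the identity and lets $G$ act on the countable disjoint union $X = \bigsqcup_n G/U_n$ of coset spaces, noting that this action is continuous, faithful, and has closed image because the pointwise stabilizers of finite subsets of $X$ form a neighbourhood base of $1_G$ consisting of open subgroups taken from $\{U_n\}$. Fixing such an embedding $G \leq \Sym(X)$, Lemma~\ref{16} applied to $G$ then says that $G$ acts oligomorphically on every union of finitely many $G$-orbits on $X$, which is precisely the statement that $G$ is locally oligomorphic on $X$.

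For the reverse direction, suppose $G \leq \Sym(X)$ is closed and locally oligomorphic, with $X$ countable. Then $G$ is non-archimedean because the pointwise stabilizers $G_A$ with $A \sub X$ finite are open subgroups forming a neighbourhood base at the identity (this structure is inherited from $\Sym(X)$). It is Polish since it is a closed subgroup of the Polish group $\Sym(X)$. Finally, local oligomorphicity together with Lemma~\ref{16} yields Roelcke precompactness.

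Thus both implications reduce to invoking the representation theorem for Polish non-archimedean groups together with Lemma~\ref{16}, and no new argument is required beyond carefully noting that the closed-subgroup-of-$\Sym(X)$ representation of a Polish non-archimedean group can be chosen on a countable $X$.
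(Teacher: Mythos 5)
Your proposal is correct and matches the paper's intent exactly: the corollary is stated there as a summary of the preceding remarks (the standard representation of a Polish non-archimedean group as a closed subgroup of $\Sym(X)$ for countable $X$, combined with Lemma~\ref{16}), and your two implications assemble precisely these ingredients, with the coset-space construction you sketch being the same device the paper uses in Lemma~\ref{19}. No further comment is needed.
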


 Closed subgroups of $\Sym(X)$ are precisely automorphism groups of first-order structures on $X$.
Indeed, if $G \leq \Sym(X)$, we consider the \textit{canonical structure} which has a relation for each $G$-orbit on $X^n$, for all $n \in \N$. Then $G$ is a closed subgroup of $\Sym(X)$ if and only if it is the full automorphism group of this canonical structure. If $X$ is countable, then, by the Ryll-Nardzewski Theorem, $G$ is oligomorphic if and only if this canonical structure is $\omega$-categorical. (The book \cite{Cameron1990} is a convenient reference for this material.)
 
 The following fact is an easy consequence of Roelcke precompactness; see \cite{Tsankov2012}*{Corollary~2.5} for a proof.
 \begin{lemma} Suppose $X$ is countable and $G \leq \Sym(X)$ is locally oligomorphic. Then $G$ has only countably many open subgroups.
 \end{lemma}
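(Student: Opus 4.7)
The plan is to reduce the problem to a local counting argument at each pointwise stabilizer of a finite set, and then show that above each such stabilizer there are in fact only \emph{finitely} many intermediate subgroups.

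First I would use the fact that $G$ is non-archimedean to observe that every open subgroup $H \le G$ contains a basic open subgroup, hence a pointwise stabilizer $G_A$ for some finite $A \sub X$. Since $X$ is countable, the family of finite subsets of $X$ is countable, so it suffices to show that for each finite $A \sub X$, the collection
\[
\mathcal{I}(A) = \{ H \le G : G_A \le H \}
\]
is countable (in fact, I expect it to be finite). Note that every $H \in \mathcal{I}(A)$ is automatically open, since it contains the open subgroup $G_A$.

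Next I would apply Roelcke precompactness of $G$ to the open subgroup $U = G_A$. By Definition~\ref{15} (in the form recorded just after it for non-archimedean groups), there is a finite $E \sub G$ with $G = G_A E G_A$, i.e.\ there are only \emph{finitely many} double cosets $G_A \backslash G / G_A$. A standard observation is that double cosets $G_A g G_A$ are in bijection with the orbits of the left action of $G_A$ on the coset space $G/G_A$: the cosets $g_1 G_A$ and $g_2 G_A$ lie in the same $G_A$-orbit if and only if $g_1 \in G_A g_2 G_A$. Hence $G_A$ has only finitely many orbits on $G/G_A$.

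Now for any subgroup $H$ with $G_A \le H \le G$, the set $H$ is a union of left cosets of $G_A$, and it is invariant under left multiplication by $G_A$; thus the image $H/G_A \sub G/G_A$ is a union of left $G_A$-orbits. Since there are only finitely many such orbits, there are only finitely many possibilities for $H/G_A$, and hence $|\mathcal{I}(A)|$ is finite. Taking the union over the countably many finite subsets $A \sub X$ yields countably many open subgroups, as required.

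There isn't really a hard step here; the only point to be careful about is the correspondence between $(G_A,G_A)$-double cosets and $G_A$-orbits on $G/G_A$, and the observation that any intermediate subgroup must be a union of such orbits, so that finiteness of the orbit count immediately bounds the number of intermediate subgroups.
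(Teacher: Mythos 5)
Your proof is correct and is essentially the standard argument that the paper defers to (it cites \cite{Tsankov2012}*{Corollary~2.5} rather than giving details): every open subgroup contains some $G_A$ with $A$ finite, and Roelcke precompactness bounds the number of $(G_A,G_A)$-double cosets, hence the number of subgroups containing $G_A$, since each such subgroup is a union of double cosets. No gaps; the reduction and the double-coset/orbit correspondence are both handled properly.
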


We also note that for such $G$, there is a `universal' choice for the set $X$.

\begin{lemma}\label{19} Suppose $G$ is a non-archimedean Polish group with countably many open subgroups. Then there is a faithful action of $G$ on a countable set $X = X(G)$ with the property that for every open subgroup $U \leq G$, there is $a \in X$ such that $U = G_a$. Moreover, $G$ is closed in $\Sym(X)$ in this action.
\end{lemma}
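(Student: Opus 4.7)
The plan is to take $X$ to be the disjoint union of the coset spaces $G/U$ as $U$ ranges over the countable collection of open subgroups of $G$, with $G$ acting by left translation on each component. Since $G$ is Polish, hence separable, any countable dense subset of $G$ meets every coset of any open subgroup, so each $G/U$ is countable, and therefore so is $X$. The basepoint $1 \cdot U$ of $G/U$ has stabilizer $U$, so every open subgroup of $G$ occurs as a point stabilizer. Faithfulness is immediate: an element acting trivially on $X$ lies in every open subgroup of $G$, and since $G$ is non-archimedean and Hausdorff, this forces the element to be the identity.

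I would then check that the induced action homomorphism $\varphi \colon G \to \Sym(X)$ is a topological embedding. Continuity holds because pointwise stabilizers of finite subsets of $X$ are finite intersections of open subgroups of $G$. The inverse is continuous on the image because every open subgroup $U$ of $G$ equals $\varphi^{-1}$ of the $\Sym(X)$-stabilizer of the point $a \in X$ with $G_a = U$; since the open subgroups of $G$ form a neighbourhood base of the identity, this identifies the topology on $G$ with the subspace topology on $\varphi(G)$.

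The main remaining issue---and where the argument requires the most care---is showing that $\varphi(G)$ is closed in $\Sym(X)$. For this I would invoke the general fact that a Polish subgroup of a Polish group is closed. Since $\varphi$ is a topological isomorphism of $G$ onto $\varphi(G)$, the image is Polish in its subspace topology, and $\Sym(X)$ is Polish (as $X$ is countable), so the fact applies. The fact itself is a Baire category argument: if $H$ is a Polish subgroup of a Polish group $K$, then $H$ is $G_\delta$ in $K$ as a completely metrizable subspace, hence a dense $G_\delta$ in its closure $\overline{H}$; any other coset $gH$ with $g \in \overline{H}$ would likewise be a dense $G_\delta$ in $\overline{H}$ (left translation by $g$ being a homeomorphism of $\overline{H}$), but two disjoint dense $G_\delta$ subsets of the Polish space $\overline{H}$ cannot coexist. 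Hence $\overline{H} = H$, and applied in our setting this yields that $\varphi(G)$ is closed in $\Sym(X)$.
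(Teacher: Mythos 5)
Your proof is correct and follows essentially the same route as the paper: the paper also takes $X(G)$ to be a disjoint union of coset spaces of open subgroups (using one representative per conjugacy class, an inessential difference) and likewise deduces closedness from the fact that the action map is a topological embedding together with the fact that a Polish subgroup of a Polish group is closed. You merely spell out the Baire category argument for that last fact, which the paper leaves implicit.
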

\begin{proof} Let $(U_i : i\in I)$ be a system of representatives for the set of conjugacy classes of open subgroups of $G$. Let $X(G)$ be the disjoint union of the (left) coset spaces $G/U_i$. So $X(G)$ is countable and if $U \leq G$ is open there is a unique $i \in I$ such that $U$ is conjugate to $U_i$, so $U = gU_i g^{-1}$ for some $g \in G$. Then $U$ is the stabilizer of the coset $a = gU_i \in X(G)$. 

  It remains to prove that $G$ is a closed subgroup of $\Sym(X(G))$. As $G$ is Polish, it suffices to show that the original topology of $G$ is the same as the one inherited from $\Sym(X(G))$. Indeed, if $\{g_n\}$ is a sequence in $G$ converging to $1$ in $\Sym(X(G))$, then it eventually enters every open subgroup of $G$, which, as $G$ is non-archimedean, means that $g_n \to 1$ in $G$.

\end{proof}

\begin{remarks}
  It is worth noting that if $M$ is a countable $\omega$-categorical structure and $G = \Aut(M)$, then the action of $G$ on $X(G)$ (as in Lemma~\ref{19}) is essentially that of $G$ on $M\eq$.
\end{remarks}

We finally observe that our setting is slightly more general than the classical one of oligomorphic groups (and $\omega$-categorical structures). Locally oligomorphic groups can be represented as inverse limits of oligomorphic ones (which, however, are not necessarily closed in $\Sym(X)$). Examples can be obtained by taking the disjoint union of countably many $\omega$-categorical structures (in disjoint languages); then the automorphism group of this structure is the direct product of the automorphism groups of the individual structures and is locally oligomorphic. Perhaps more interestingly, consider the abelian group $M$ which is a direct sum of countably many copies of the Pr\"ufer $p$-group $Z(p^{\infty})$ (complex roots of $1$ of order a power of $p$) for some prime $p$. It can be checked that $\Aut(M)$ acts locally oligomorphically on $M$. Another example is given by the $\aleph_0$-partite random graph (with named parts).
Further interesting structures can be constructed from inverse limits of finite covers of $\omega$-categorical structures.

\section{Algebraic closure and Neumann's Lemma} 

The following result is sometimes called \textit{Neumann's Lemma} (cf. Corollary~4.2.2 of \cite{Hodges1993}). It is equivalent to a well known result of B.~H.~Neumann on covering groups by cosets; an independent, combinatorial proof can be found in \cite{Birch1976}, or \cite{Cameron1990}*{2.16}. 

\begin{lemma} \label{21} Suppose $G$ is a group acting on a set $X$ and all $G$-orbits on $X$ are infinite. Suppose $A, B \subseteq X$ are finite. Then there is some $g \in G$ with $gA \cap B = \emptyset$.
\end{lemma}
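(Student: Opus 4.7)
The plan is to argue by contradiction and reduce the statement to B.~H.~Neumann's classical theorem on covers of a group by finitely many cosets of subgroups (which says that if $G=\bigcup_{i=1}^n g_i H_i$, then some $H_i$ has finite index in $G$). Suppose, for the sake of contradiction, that $gA \cap B \neq \emptyset$ for every $g \in G$.

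The key observation is that this assumption forces $G$ to be a finite union of cosets of point stabilizers. Indeed, for every pair $(a,b) \in A \times B$ define
\[
T_{a,b} := \{h \in G : ha = b\}.
\]
If $T_{a,b}$ is non-empty, pick any $g_0 \in T_{a,b}$; then $T_{a,b} = g_0 G_a$ is a left coset of the stabilizer $G_a$. Our assumption says that every $g \in G$ belongs to at least one such $T_{a,b}$, so
\[
G \;=\; \bigcup_{(a,b) \in A \times B} T_{a,b},
\]
which exhibits $G$ as a union of at most $|A|\cdot|B|$ left cosets of the subgroups $\{G_a : a \in A\}$.

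Applying Neumann's coset-covering theorem to this finite cover, at least one stabilizer $G_a$ ($a \in A$) has finite index in $G$. But $[G:G_a] = |Ga|$, so the orbit of $a$ under $G$ is finite, contradicting the hypothesis that every $G$-orbit on $X$ is infinite. This contradiction yields the existence of some $g \in G$ with $gA \cap B = \emptyset$.

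The only non-trivial input is Neumann's coset-covering theorem itself, which is standard; the rest of the argument is a short bookkeeping step. If one wishes to avoid appealing to that theorem, one can instead give a direct combinatorial proof by induction on $|A|$ along the lines of \cite{Birch1976} or \cite{Cameron1990}*{2.16}, handling the base case $|A|=1$ using that $Ga$ is infinite while $B$ is finite, and in the inductive step carefully replacing group elements to avoid the extra point in $A$; the main subtlety there is arranging the inductive hypothesis so that it can be applied to a pair of sets still within the finite-orbit-free regime.
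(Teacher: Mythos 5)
Your proof is correct: the reduction to B.~H.~Neumann's coset-covering theorem via the cosets $T_{a,b} = g_0 G_a$ is exactly the ``equivalence'' the paper alludes to, and the paper itself gives no proof of Lemma~\ref{21}, only citing that equivalence and the combinatorial proofs in \cite{Birch1976} and \cite{Cameron1990}. So your argument matches the intended route and is complete as written.
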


The result has the following model-theoretic consequence, which can be regarded as `folklore'.

\begin{lemma}\label{22} Suppose $M$ is a countable, saturated first-order structure and $A, B, C \subseteq M$ are algebraic closures of some finite subsets of $M$ with $B \subseteq C$. Then there is $g \in \Aut(M/B)$ such that $g(C) \cap A = B \cap A$. 
\end{lemma}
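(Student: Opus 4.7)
The plan is to apply Neumann's Lemma (Lemma~\ref{21}) to the action of $H := \Aut(M/B)$ on $M \setminus B$. First, I would simplify the conclusion: since $B \subseteq C$ and every $g \in H$ fixes $B$ pointwise, we automatically have $B \cap A \subseteq g(C) \cap A$, and $g$ also sends $C \setminus B$ into $M \setminus B$. Hence the conclusion $g(C) \cap A = B \cap A$ is equivalent to the single condition
\[
  g(C \setminus B) \cap (A \setminus B) = \emptyset.
\]

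The key model-theoretic input is then that every $c \in M \setminus B$ has infinite $H$-orbit. Since $B = \acl(B_0)$ for some finite $B_0$, we have $c \notin \acl(B_0)$, so $\omega$-saturation of $M$ ensures that $\mathrm{tp}(c/B_0)$ has infinitely many realizations. Combining this with $\omega$-homogeneity and the fact that every element of $B$ has finite $\Aut(M/B_0)$-orbit upgrades this to infinitude of the $H$-orbit of $c$; this is the folklore ingredient to which the authors allude.

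Granted the claim, I would apply Neumann's Lemma to $H$ acting on $X = M \setminus B$ (on which all $H$-orbits are infinite), with the finite subsets $C \setminus B$ and $A \setminus B$. Finiteness of these sets is automatic in the $\omega$-categorical setting, which is the case of primary interest for the applications; in general one reduces to it using the structure of $\acl$ of a finite set and a routine back-and-forth/compactness argument. The output is $g \in H$ with $g(C\setminus B) \cap (A\setminus B) = \emptyset$, which is exactly the simplification obtained at the first step.

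I expect the main obstacle to be verifying the infinitude of $H$-orbits on $M \setminus B$ when $B$ is infinite: although the $\Aut(M/B_0)$-orbit of $c$ is clearly infinite by $\omega$-saturation, the quotient group $\Aut(M/B_0)/H$ can itself be infinite (as it acts faithfully on the possibly infinite set $B$), so deducing infinitude of the $H$-orbits requires a more careful argument than a direct orbit-stabilizer count.
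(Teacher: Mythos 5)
There is a genuine gap, and it sits exactly at the two points you flag and then wave away. First, $A \setminus B$ and $C \setminus B$ need not be finite: for a general countable saturated structure the algebraic closure of a finite set can be infinite (this is the typical situation in $M\eq$, which is precisely where the paper needs the lemma in Corollary~\ref{maincor}), so Neumann's Lemma (Lemma~\ref{21}) cannot be applied directly to these sets. The ``routine back-and-forth/compactness argument'' you defer to is not routine: the natural partial type expressing $\acl(\x) \cap \acl(\a) \sub B$ has parameters ranging over the possibly infinite set $\acl(\a) \setminus B$, and a countable saturated structure only realizes types over \emph{finite} parameter sets, so $\omega$-saturation does not close the gap by itself. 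Second, the infinitude of the $\Aut(M/B)$-orbits on $M \setminus B$ when $B$ is infinite is asserted but not proved; as you yourself note in your last paragraph, $\Aut(M/B)$ can have infinite ``index'' in $\Aut(M/B_0)$, and the orbit-counting shortcut fails. In the $\omega$-categorical, single-sorted case both issues disappear ($\acl$ of a finite set is finite), and there your argument is complete; but the lemma is stated, and used, in the general saturated setting.

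For comparison: the paper does not actually prove Lemma~\ref{22} --- it is cited as folklore (cf.\ Hodges, Corollary~4.2.2) --- and instead proves the permutation-group analogue, Lemma~\ref{25}, where both difficulties are resolved using Roelcke precompactness. The infinitude of orbits is Lemma~\ref{24}, whose Claim~1 (stabilization of the $G_{E_i}$-orbits) depends on there being finitely many orbits on $A_0^2$; the infinitude of $\acl$ is handled by exhausting $M \setminus B$ by finite unions of $H$-orbits $X_0 \sub X_1 \sub \cdots$, applying Neumann's Lemma to the finite intersections $X_j \cap \acl(\c')$ and $X_j \cap \acl(\a)$, observing that the resulting condition depends only on the $H_{\a}$-orbit of $\c'$, and pigeonholing over the \emph{finitely many} such orbits to get a single $\c'$ working for all $j$. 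That finiteness is again oligomorphy. Neither device transfers verbatim to an arbitrary countable saturated structure, so a correct write-up of Lemma~\ref{22} needs a genuinely different argument at these two points (or must restrict to the multi-sorted $\omega$-categorical setting, where each sort contributes only finitely many algebraic points and a version of the paper's exhaustion-plus-pigeonhole argument can be run sort by sort).
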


Here, the algebraic closure in $M$  of a set $E$ is the union of the finite $E$-definable subsets of $M$. Saturation means that if $E$ is finite and $S$ is a family of $E$-definable subsets of $M$ with the finite intersection property, then $\bigcap S \neq \emptyset$. It implies that the algebraic closure of a finite $E \subseteq M$ is the union of the finite $\Aut(M/E)$-orbits (where $\Aut(M/E)$ denotes the pointwise stabilizer of $E$ in the automorphism group $\Aut(M)$). 

We give an analogous result for  locally oligomorphic permutation groups.

\begin{definition} Suppose $G \leq \Sym(X)$ is locally oligomorphic on $X$. If $E \subseteq X$ is finite, the \textit{algebraic closure} $\acl(E)$ of $E$ in $X$ is the union of the finite $G_{E}$-orbits. 
\end{definition}
 Note that, with this notation, if $Y$ is a $G$-orbit on $X$, then $\acl(E) \cap Y$ is finite. 
 
\begin{lemma} \label{24} Suppose $M$ is countable and $G \leq \Sym(M)$ is closed and locally oligomorphic on $M$. If $E \subseteq M$ is finite, then $G_{\acl(E)}$  is also locally oligomorphic on $M$ and has no finite orbits on $M \setminus \acl(E)$ .
\end{lemma}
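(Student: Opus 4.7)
The plan is to analyze the quotient group $Q := G_E/G_{\acl(E)}$ and deduce both conclusions from the fact that $Q$ is compact. Write $F_0 = \acl(E)$. First I would verify that $G_{F_0}$ is normal in $G_E$: since $F_0$ is a union of finite $G_E$-orbits it is $G_E$-invariant as a set, so $g G_{F_0} g^{-1} = G_{gF_0} = G_{F_0}$ for every $g \in G_E$. The action of $G_E$ on $F_0$ thus factors through $Q$ and preserves each finite $G_E$-orbit $O_i$ of $F_0$; for each $f \in F_0$ the point-stabilizer $Q_f$ is an open subgroup of $Q$ of finite index (equal to the size of the $G_E$-orbit of $f$), and since $Q$ acts faithfully on $F_0$ these stabilizers have trivial intersection. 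Hence every open neighborhood of the identity in $Q$ contains a finite-index open subgroup, and there are only countably many such subgroups. Since $Q$ is Polish (quotient of the Polish group $G_E$ by the closed normal subgroup $G_{F_0}$) and Polish groups are complete in the two-sided uniformity, a standard Cauchy-sequence argument identifies $Q$ with the inverse limit of its quotients by a countable cofinal decreasing family of these finite-index open subgroups; hence $Q$ is profinite, and in particular compact. I expect this compactness step to be the main technical point of the proof.

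Granted compactness of $Q$, both assertions of the lemma follow quickly. For the non-existence of finite orbits, let $a \in M \setminus F_0$; then $Y := G_E \cdot a$ is infinite, and by normality of $G_{F_0}$ the group $G_E$ permutes the $G_{F_0}$-orbits in $Y$ transitively through $Q$. A continuous action of the compact group $Q$ on the discrete space $Y/G_{F_0}$ has finite orbits, so $Y/G_{F_0}$ is finite; since all $G_{F_0}$-orbits in $Y$ have the same size (being $G_E$-conjugate) and their union $Y$ is infinite, each of them must be infinite. For local oligomorphy, suppose $Z$ is a finite union of $G_{F_0}$-orbits. Then $Z \cap F_0$ consists of finitely many fixed points, while $Z \setminus F_0$ lies in a finite union $\tilde Y$ of infinite $G_E$-orbits; local oligomorphy of $G$ gives $G_E$ only finitely many orbits on $\tilde Y^n$, and by the same compactness of $Q$ each such orbit decomposes into finitely many $G_{F_0}$-orbits, yielding oligomorphy of $G_{F_0}$ on $Z$.
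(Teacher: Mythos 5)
Your reduction of the lemma to the compactness of $Q = G_E/G_{\acl(E)}$ is sound in outline, and \emph{granted} that compactness, your derivations of both conclusions are correct. The genuine gap is in the proof of compactness itself, at the sentence ``Hence every open neighbourhood of the identity in $Q$ contains a finite-index open subgroup.'' The stabilizers $Q_f$, $f \in \acl(E)$, are indeed open, of finite index, and of trivial intersection, but a family of open finite-index subgroups with trivial intersection need not be a neighbourhood base at the identity: the inference already fails for $\mathbb{Z}$ with the discrete topology and the subgroups $n\mathbb{Z}$. In the quotient topology on $Q$ (the one for which $Q$ is Polish, which you need for the completeness/Cauchy step), a basic neighbourhood of the identity is the image of $G_{E \cup A}$ for an \emph{arbitrary} finite $A \sub M$, and $A$ will in general contain points outside $\acl(E)$; the trivial intersection of the $Q_f$ says nothing about such neighbourhoods. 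What you actually need is: for every finite $A \sub M$ there is a finite $G_E$-invariant $E' \sub \acl(E)$ with $G_{E'} \sub G_{\acl(E)}\, G_{E\cup A}$ --- and this is essentially the whole content of the lemma. (If instead you give $Q$ the topology of pointwise convergence on $\acl(E)$, the $Q_f$ do form a neighbourhood base, but then $Q$ is only known to be a continuous injective image of a Polish group, so the completeness step breaks.)

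The claim that $Q$ is compact is nevertheless true, but proving it requires the finiteness input your argument skips. The paper supplies it in Claim~1 of its proof: writing $\acl(E)$ as an increasing union of finite $G_E$-invariant sets $E_i$, each $G_{E_i}$ is normal of finite index in $G_E$, so the $G_{E_i}$-orbits of a finite tuple $\a$ give a decreasing sequence of finite $G_E$-invariant partitions of the $G_E$-orbit of $\a$; since $G_E$ has only finitely many orbits on pairs from that orbit, there are finitely many such partitions and the sequence stabilizes. That stabilization (together with a back-and-forth using closedness of $G$) is exactly what shows that the image of $G_{E\cup A}$ in $Q$ contains the image of some $G_{E_N}$, i.e., your claimed property of neighbourhoods of the identity. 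Alternatively, compactness of $Q$ follows from Lemma~\ref{l:hom-compact} applied to the restriction homomorphism from $G_E$ to the profinite group $\prod_i \Sym(O_i)$, but that lemma uses Roelcke precompactness and a Baire category argument, neither of which appears in your proposal. As written, the proposal assumes the main technical point rather than proving it.
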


\begin{proof} 
Let $B = \acl(E)$. Note that $B$ is the union of a chain $E = E_0 \subseteq E_1 \subseteq E_2 \subseteq \cdots $ of finite $G_{E}$-invariant sets. 

\smallskip

\noindent\textit{Claim 1:\/} Suppose $\a \in M^n$ is a finite tuple and let $A_i$ be the $G_{E_i}$-orbit containing $\a$. Then there is $N \in \N$ such that $A_i = A_N$ for all $i \geq N$. 

Indeed, note that  $G_{E_i}$ is a normal subgroup of finite index in $G_{E}$. It follows that $\{ gA_i : g \in G_{E}\}$ is a $G_{E}$-invariant partition of $A_0$ (with finitely many parts). As $G_{E}$ has finitely many orbits on $A_0^2$, there are only finitely many possibilities for such a partition, so as $A_i \supseteq A_{i+1}$, they must be equal for sufficiently large $i$. $(\Box_{Claim\,1}.)$
\smallskip

It is worth noting that the $N$ here depends only on the $G_{E}$-orbit containing $\a$, not the particular representative $\a$. As $G_{B} = \bigcap_i G_{E_i}$, it follows that in each $G_{E}$-orbit, the $G_{B}$-orbits coincide with the $G_{E_N}$-orbits, and as $G_{E_N}$ has finite index in $G_{E}$, there are only finitely many of them, showing that $G_{B}$ is locally oligomorphic and that $G_{B} \cdot \a$ is infinite whenever $G_{E} \cdot \a$ is.

If $a \in M \setminus \acl(E)$, then by the Claim, $G_{\acl(E)} \cdot a = G_{E_N} \cdot a$ for some $N$; as $G_{E_N}$ has finite index in $G_E$ and $G_E \cdot a$ is infinite, this means that $G_{\acl(E)} \cdot a$ is also infinite.
\end{proof}

\begin{lemma}\label{25} Suppose $M$ is countable and $G \leq \Sym(M)$ is closed and locally oligomorphic on $M$. Suppose $A, B, C$ are algebraic closures in $M$ of some finite subsets of $M$ and $B \subseteq C$. Then there is $g \in G_{B}$ such that $g(C) \cap A = B \cap A$.
\end{lemma}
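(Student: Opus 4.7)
The plan is to proceed by induction on $k = |E_C \setminus E_B|$, where $E_B \subseteq E_C \subseteq M$ are finite sets with $\acl(E_B) = B$ and $\acl(E_C) = C$ (we may arrange $E_B \subseteq E_C$ by replacing $E_C$ with $E_B \cup E_C$). When $k = 0$, $C = B$ and $g = \id$ gives the base case.

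For the inductive step, pick $e \in E_C \setminus E_B$, let $E' = E_C \setminus \{e\}$ and $C' = \acl(E')$; then $B \subseteq C' \subseteq C$ with $|E' \setminus E_B| = k-1$. By the inductive hypothesis, obtain $g' \in G_B$ with $g'(C') \cap A = B \cap A$. Set $D = g'(C')$, an algebraic closure of the finite set $g'(E')$, containing $B$ and satisfying $D \cap A = B \cap A$; set $f = g'(e)$, so $f \notin D$ since $e \notin C'$.

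The goal now is to find $g'' \in G_D$ such that $g := g''g' \in G_B$ satisfies $g(C) \cap A = B \cap A$. Since $g''|_D = \id$, a direct calculation yields
\[
g(C) = \acl(D \cup \{g''(f)\}),
\]
so it suffices to find $f' := g''(f)$ in the $G_D$-orbit of $f$ with $\acl(D \cup \{f'\}) \cap A = B \cap A$. Set $\tilde A = \acl(D \cup E_A)$; this is an algebraic closure of a finite set and hence meets the $G$-orbit of $f$ in a finite subset. By Lemma~\ref{24}, the $G_D$-orbit of $f$ is infinite, so Neumann's Lemma (Lemma~\ref{21}) applied to $G_D$ acting on this orbit produces $g'' \in G_D$ with $f' := g''(f) \notin \tilde A$, in particular $f' \notin A$.

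The main obstacle will be verifying $\acl(D \cup \{f'\}) \cap A \subseteq D$, which combined with $D \cap A = B \cap A$ gives the conclusion. If some $a \in A \setminus D$ were to lie in $\acl(D \cup \{f'\})$, an exchange-style orbit-counting argument comparing the $G_D$-orbit of the pair $(f', a) \in M^2$ with the $G_D$-orbits of $f'$ and $a$ individually should force $f' \in \acl(D \cup \{a\}) \subseteq \tilde A$, contradicting the choice of $f'$. Verifying this exchange step directly from the orbit-counting consequences of local oligomorphicity, paralleling the way Lemma~\ref{22} is deduced from saturation, is the technical heart of the argument.
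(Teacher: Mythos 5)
Your argument has a genuine gap at exactly the point you flag as "the technical heart": the exchange step is not merely unverified, it is false. Algebraic closure in a locally oligomorphic (even oligomorphic) permutation group does not satisfy exchange. Concretely, let $M$ have two infinite sorts $P$ and $Q$ together with a surjection $\pi \colon P \to Q$ all of whose fibres are infinite, and let $G$ be the group of all permutations preserving this data; then $G$ is oligomorphic, $\acl(\emptyset)=\emptyset$, and for $p \in P$ we have $\pi(p) \in \acl(p)$ while $p \notin \acl(\pi(p))$ (the $G_{\pi(p)}$-orbit of $p$ is the whole infinite fibre). Your orbit-counting heuristic cannot rescue this: the $G$-orbit of the pair $(p,\pi(p))$ has finite fibres over the orbit of $p$ but infinite fibres over the orbit of $\pi(p)$, so no contradiction arises. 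Moreover, this example defeats not just the justification but the construction itself: take $B=D=\emptyset$, $A=\acl(a)=\{a,\pi(a)\}$ for $a\in P$, and $C=\acl(e)$ for $e \in P$. Your recipe chooses $f'$ in the orbit $P$ of $f=e$ avoiding only the finite set $\tilde A\cap P=\{a\}$; but any $f'$ in the fibre $\pi^{-1}(\pi(a))\setminus\{a\}$ passes that test while $\acl(f')\cap A=\{\pi(a)\}\neq\emptyset=B\cap A$. The set of bad choices of $f'$ is infinite, so no single application of Neumann's Lemma to a finite exceptional set can avoid it.

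The underlying difficulty is that you must separate the entire (typically infinite) set $\acl(D\cup\{f'\})$ from the (typically infinite) set $A$, not merely keep the single point $f'$ out of a finite set. The paper's proof does this without any induction: working with $H=G_B$, it fixes the finitely many $H_{\a}$-orbits $S_1,\dots,S_k$ on the $H$-orbit of $\c$, exhausts $M\setminus B$ by a chain $X_0\subseteq X_1\subseteq\cdots$ of finite unions of $H$-orbits (so that $A$ and each translate of $C$ meet each $X_j$ finitely), applies Neumann's Lemma once for each $j$, and then uses a pigeonhole argument over the nested sets of "good" indices $i\le k$ to find a single $S_i$ that works for every $j$ simultaneously. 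That finiteness-of-orbits-plus-exhaustion mechanism is what replaces the (false) exchange property, and some version of it is unavoidable in your inductive step as well.
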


\begin{proof} Let $\a, \c$ be finite tuples with $A = \acl(\a)$ and $C = \acl(\c)$ (where algebraic closure is from the action of $G$ on $M$, of course). Consider the action of $H = G_{B}$ on $M$. By Lemma~\ref{24}, $H$ is locally oligomorphic on $M$ and has no finite orbits on $M \setminus B$. Let $S$ be the $H$-orbit containing $\c$ and let $S_1,\ldots, S_k$ be the $H_{\a}$-orbits on $S$ (note that there are finitely many of these, as $H$ is oligomorphic on the union of the $H$-orbits which contain the elements of the tuples $\a,\c$).

Write $M\setminus B$ as the union of a chain $X_0 \subseteq X_1 \subseteq X_2 \subseteq \cdots $ of subsets each of which is a finite union of $H$-orbits. Recall that the intersection of $A, C$ with each $X_i$ is finite. So by Lemma~\ref{21}, for each $j \in \N$, there is some $\c' \in S$ such that 
\[ (X_j \cap \acl(\c')) \cap (X_j \cap \acl(\a)) = \emptyset.\]
Thus, for each $j \in \N$ there is some $i \leq k$ such that this holds for all $\c' \in S_i$. 
So there is some $i \leq k$ such that this holds for all $j$ and for all $\c' \in S_i$. In particular, there is $\c' \in S$ such that $\acl(\c') \cap \acl(\a) \subseteq B$, as required.
\end{proof}

\begin{remarks} The result can be derived from the model-theoretic statement Lemma~\ref{22}, though some care is required as the canonical structure for a locally oligomorphic $G \leq \Sym(X)$  is not saturated if there are infinitely many $G$-orbits on $X$. However, if we regard it as a multi-sorted structure (with a sort for each of the $G$-orbits on $X$), then it is saturated (as a multi-sorted structure) and Lemma~\ref{22} also holds in this context.  Nevertheless, it seems worthwhile to offer a direct proof which does not use the model-theoretic terminology.
\end{remarks}

\begin{definition} Suppose $G$ is a topological group. Then $G^\circ$ denotes the intersection of the open subgroups of finite index in $G$.
\end{definition}

\begin{lemma}\label{28} Suppose $G$ is a non-archimedean, Roelcke precompact Polish group. Consider $G$ as a closed subgroup of $\Sym(X(G))$ (as in Lemma~\ref{19}). Then $G^\circ = G_{\acl(\emptyset)}$ and it is Roelcke precompact.  Moreover $(G^\circ)^\circ = G^\circ$.
\end{lemma}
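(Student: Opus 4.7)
My plan is to address the three claims in order; the first two follow quickly from Lemmas~\ref{19} and~\ref{24}, while the equality $(G^\circ)^\circ = G^\circ$ requires a small additional argument via a minimality characterisation of $G^\circ$.

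First, for $G^\circ = G_{\acl(\emptyset)}$: by Lemma~\ref{19}, every open subgroup of $G$ has the form $G_a$ for some $a \in X(G)$, with $[G:G_a] = |G \cdot a|$, so the open subgroups of finite index are precisely $\{G_a : a \in \acl(\emptyset)\}$ and their intersection is $G_{\acl(\emptyset)}$. For Roelcke precompactness of $G^\circ$, Lemma~\ref{24} (applied with $E = \emptyset$) shows $G^\circ = G_{\acl(\emptyset)}$ is locally oligomorphic on $X(G)$; being closed in $G$, hence in $\Sym(X(G))$, it is Roelcke precompact by the corollary after Lemma~\ref{16}.

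For $(G^\circ)^\circ = G^\circ$, the strategy is to show that $G^\circ$ is the \emph{smallest} closed normal subgroup $N \trianglelefteq G$ with $G/N$ profinite, and then verify that $(G^\circ)^\circ$ meets this condition. The characterisation itself is short: $G^\circ$ is characteristic (hence normal) in $G$, and $G/G^\circ = \varprojlim_V G/V$ over open finite-index $V \leq G$ is profinite; conversely, if $G/N$ is profinite and $g \notin N$, then $gN$ is separated from $1_{G/N}$ by an open finite-index subgroup of $G/N$ (profinite groups being residually finite), whose preimage in $G$ is an open finite-index subgroup not containing $g$, so $g \notin G^\circ$.

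To conclude, I would observe that $(G^\circ)^\circ$ is closed in $G$ and, as a characteristic subgroup of $G^\circ$, is normalised by the continuous conjugation action of $G$ on $G^\circ$, hence is normal in $G$. The short exact sequence of topological groups
\[ 1 \to G^\circ/(G^\circ)^\circ \to G/(G^\circ)^\circ \to G/G^\circ \to 1 \]
has both outer terms profinite, so $G/(G^\circ)^\circ$ is profinite as well: it is compact by the standard extension result for compact groups, Hausdorff since $G$ is Polish, and totally disconnected because any connected subset projects to a point in the totally disconnected $G/G^\circ$, hence lies in a single coset of the totally disconnected fibre $G^\circ/(G^\circ)^\circ$. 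By the minimality above, $G^\circ \subseteq (G^\circ)^\circ$, completing the proof. The only step demanding real care is this extension-of-profinite argument; everything else is bookkeeping built on Lemmas~\ref{19} and~\ref{24} together with the standard formal properties of $G^\circ$.
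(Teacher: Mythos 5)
Your first two claims are fine and essentially reproduce the paper's argument: the open finite-index subgroups of $G$ are exactly the stabilizers $G_a$ with $a \in \acl(\emptyset)$, so their intersection is $G_{\acl(\emptyset)}$, and Lemma~\ref{24} with $E = \emptyset$ gives local oligomorphicity, hence Roelcke precompactness of the closed subgroup $G^\circ$.

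The argument for $(G^\circ)^\circ = G^\circ$, however, has a genuine gap. Your whole strategy rests on $G/G^\circ$ (and likewise $G^\circ/(G^\circ)^\circ$) being profinite, and you assert this by writing $G/G^\circ = \varprojlim_V G/V$ as though it were a formal property of $G^\circ$. It is not: the natural map $G/G^\circ \to \varprojlim_V G/V$ is injective with dense image, but its surjectivity (equivalently, the compactness of $G/G^\circ$) can fail badly --- for an infinite discrete group $G$ with $G^\circ = 1$ (e.g.\ a free group, which is residually finite), $G/G^\circ = G$ is not compact. What makes the claim true here is Roelcke precompactness, via the Baire-category argument of Lemma~\ref{l:hom-compact}: images of open subgroups are somewhere dense, so the image of $G$ in the inverse limit is a non-meager Borel subgroup, hence clopen, hence everything. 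That lemma is proved independently of Lemma~\ref{28}, so you could cite it without circularity (noting that you also need the Roelcke precompactness of $G^\circ$, supplied by your first part, to apply it to $G^\circ/(G^\circ)^\circ$); but as written, the step you dismiss as bookkeeping is precisely the hard content. Once it is supplied, your minimality characterisation and the extension argument (compact-by-compact is compact, totally disconnected fibres over a totally disconnected base) do go through and give a genuinely different route from the paper, which instead argues combinatorially: given an open finite-index $U \leq G^\circ$, it picks $e \in X(G)$ with $G_e \cap G^\circ \leq U$, shows the $U$-orbit $D$ of $e$ belongs to a finite $G$-invariant partition of the $G$-orbit of $e$, and takes $V$ to be the setwise stabilizer of $D$; then $V$ is open of finite index in $G$ with $V \cap G^\circ = U$, and since $V \geq G^\circ$ this forces $U = G^\circ$.
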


\begin{proof} By definition, an element of $X(G)$ is in $\acl(\emptyset)$ if and only if its stabilizer is open and of finite index. Moreover, any such subgroup is the stabilizer of some point of $X(G)$, so the first statement is immediate from Lemma~\ref{24}. 

  For the second statement, suppose $U \subseteq G^\circ$ is open and of finite index.  The topology on $G^\circ$ is the subspace topology so there is some $e \in X(G)$ such that $G_e \cap G^\circ \leq U \leq G^\circ$. Let $D$ be the $U$-orbit on $X(G)$ containing $e$; let $C$ be the $G^\circ$-orbit and $E$ the $G$-orbit. Note that by Claim 1 in the proof of Lemma~\ref{24}, $C$ is equal to the $G_{X}$-orbit containing $e$, for some finite, $G$-invariant $X \subseteq \acl(\emptyset)$. Thus $\{ gC : g \in G\}$ is a finite partition of $E$. So as $U$ is of finite index in $G^\circ$, we have that $\{ gD : g \in G\}$ is also a finite partition of $E$. Let $V$ be the setwise stabilizer of $D$ in $G$. So this is an open subgroup of finite index in $G$ and $V \cap G^\circ = U$. (To see this, let $v \in V \cap G^\circ$. Then there is $u \in U$ such that $v \cdot e = u \cdot e$, so $u^{-1}v \in G_e$. Therefore $v \in uG_e \cap G^\circ = u(G_e \cap G^\circ) \sub U$.) But $V \geq G^\circ$, so $U = G^\circ$.
\end{proof}

\begin{lemma}
  \label{l:hom-compact}
  Let $G$ be a non-archimedean, Roelcke precompact, Polish group and $\pi \colon G \to K$ be a continuous homomorphism to a compact Polish group. Then $\pi(G)$ is closed in $K$. In particular, $G/G^\circ$ is a compact, profinite group.
\end{lemma}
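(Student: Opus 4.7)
The plan is to prove the two statements in sequence: first that $H := \overline{\pi(G)}$ is profinite (using Roelcke precompactness), then factor $\pi$ through $G/G^\circ$, and finally show $G/G^\circ$ is compact via a back-and-forth argument using its faithful action on $\acl(\emptyset)$.

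For Step 1, I show $H$ is profinite. Given any open neighborhood $W$ of $1$ in $H$, pick an open $W'$ with $\overline{W'} \sub W$; continuity of $\pi$ and non-archimedeanness of $G$ give an open subgroup $U \leq G$ with $\pi(U) \sub W'$, so $V := \overline{\pi(U)}$ is a closed subgroup of $H$ contained in $W$. Roelcke precompactness yields $G = UEU$ for some finite $E$, and taking closures in the compact group $H$ gives $H = V\pi(E)V$, a finite union of $V$-double cosets. Thus the compact group $V$ has only finitely many orbits on the compact Hausdorff space $H/V$ under left multiplication; each orbit is closed, being the image of $V$, and a partition of a topological space into finitely many closed sets forces each part to also be open. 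In particular the singleton orbit $\{V\} \sub H/V$ is open, so $V$ is open in $H$. Hence $H$ has a neighborhood basis at $1$ of open subgroups and is profinite.

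For Step 2, I factor $\pi$ through $G/G^\circ$. For each open subgroup $V \leq H$ (automatically of finite index since $H$ is compact), $\pi^{-1}(V)$ is open in $G$, and density of $\pi(G)$ combined with openness of $V$ gives $\pi(G) V = H$, so $G/\pi^{-1}(V) \to H/V$ is a bijection onto the finite coset space $H/V$, and $\pi^{-1}(V)$ has finite index. Therefore $G^\circ \sub \pi^{-1}(V)$, i.e., $\pi(G^\circ) \sub V$, for every such $V$; intersecting over all open $V \leq H$, whose intersection in the profinite $H$ is $\{1\}$, gives $\pi(G^\circ) = \{1\}$. So $\pi$ factors as $G \to G/G^\circ \xrightarrow{\bar\pi} K$.

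For Step 3, I show $G/G^\circ$ is compact profinite. Realize $G \leq \Sym(X(G))$ as in Lemma~\ref{19}; by Lemma~\ref{28}, $G^\circ = G_{\acl(\emptyset)}$, so decomposing $\acl(\emptyset) = \bigsqcup_i O_i$ into its finite $G$-orbits, the restriction map $r \colon G \to \prod_i \Sym(O_i)$ into the compact profinite Polish group $\prod_i \Sym(O_i)$ is continuous with kernel $G^\circ$ and induces an injective continuous homomorphism $\bar r \colon G/G^\circ \hookrightarrow \prod_i \Sym(O_i)$. The crucial claim is that $\bar r(G/G^\circ)$ is closed: if $\sigma$ lies in its closure, then for each finite $F \sub \acl(\emptyset)$ some $g_F \in G$ satisfies $g_F|_F = \sigma|_F$, so $\sigma|_F$ is a partial elementary map of the canonical multi-sorted $\omega$-categorical structure on $X(G)$ (as in the remarks after Lemma~\ref{25}). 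A back-and-forth argument, using the saturation of this multi-sorted structure to extend $\sigma$ from $\acl(\emptyset)$ to all of $X(G)$ one point at a time, produces $g \in G$ with $r(g) = \sigma$. Thus $\bar r(G/G^\circ)$ is a closed subgroup of a compact profinite Polish group, hence itself compact profinite, and the open mapping theorem for Polish groups makes $\bar r$ a topological isomorphism onto its image, so $G/G^\circ$ is compact profinite. Combined with Step 2, $\pi(G) = \bar\pi(G/G^\circ)$ is the continuous image of a compact group in $K$ and is therefore compact and closed. The main technical obstacle will be the back-and-forth in Step 3, which hinges on the multi-sorted $\omega$-saturation and requires care in extending $\sigma$ across the infinite $G$-orbits outside $\acl(\emptyset)$.
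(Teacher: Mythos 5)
Your Steps 1 and 2 are correct, and Step 1 coincides with the first half of the paper's argument (the double-coset computation showing $\cl{\pi(V)}$ is open in the compact closure). After that your route diverges: the paper deduces closedness of $\pi(G)$ directly from a Baire category argument --- since $\pi(V)$ is somewhere dense for every open $V \leq G$ and $G$ is Polish, $\pi(G)$ is a non-meager Borel subgroup of $K$, hence clopen --- and only then identifies $G/G^\circ$ with $\varprojlim G/N$. You instead try to prove compactness of $G/G^\circ$ first, via the embedding $\bar r \colon G/G^\circ \to \prod_i \Sym(O_i)$, and deduce everything else from that. That architecture is legitimate, but it puts all the weight on the claim that $\bar r(G/G^\circ)$ is closed, which is exactly where your proof is incomplete.

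The gap is in the back-and-forth of Step 3, and the tool you invoke there does not suffice as stated. Multi-sorted $\omega$-saturation only realizes types over \emph{finite} parameter sets, whereas to extend $\sigma$ (a map defined on all of $\acl(\emptyset)$, which is infinite exactly in the cases where anything needs proving) by a point $x \notin \acl(\emptyset)$, you must realize the pushforward of $\mathrm{tp}(x/\acl(\emptyset))$, a type over an infinite set. A decreasing chain of nonempty clopen cosets in a Polish (or even closed) permutation group can perfectly well have empty intersection --- compare $\mathbb{Z}$ acting on $\bigsqcup_n \mathbb{Z}/p^n\mathbb{Z}$ --- so some additional input is genuinely required. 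The missing ingredient is Claim 1 in the proof of Lemma~\ref{24}: for every finite tuple $\a$, the $G_{E_i}$-orbit of $\a$ stabilizes along the chain of finite $G$-invariant sets $E_i$ exhausting $\acl(\emptyset)$, so that $G_{\acl(\emptyset)}\cdot\a = G_{E_N}\cdot\a$ for some $N$. This is what reduces types over $\acl(\emptyset)$ to types over finite subsets and lets the back-and-forth go through (it is also where Roelcke precompactness enters, without which the claim is false). If you cite that claim and carry out the extension step explicitly --- choose $N$ for the tuple $\a x$ where $\a$ enumerates the current finite domain, extend $\sigma|_{E_N}$ together with the current partial map to some $g \in G$, and set $y = g(x)$, checking compatibility with $\sigma$ on all larger finite $F \sub \acl(\emptyset)$ by composing with an element of $G_{\acl(\emptyset)}$ --- your argument closes up. As written, "requires care" is standing in for the one step that actually needs a proof.
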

\begin{proof}
  Without loss of generality, we may assume that $\pi(G)$ is dense in $K$. Let $V \leq G$ be an open subgroup. As $G$ is Roelcke precompact, there is a finite $F \sub G$ such that $VFV = G$. Using that $K$ is compact, we obtain
  \begin{equation*}
    K = \cl{\pi(V) \pi(F) \pi(V)} = \cl{\pi(V)} \pi(F) \cl{\pi(V)}.
  \end{equation*}
  So $K$ is the disjoint union of finitely many double cosets of the compact group $\cl{\pi(V)}$, which are closed and, therefore, open. In particular, $\cl{\pi(V)}$ is open.

  The open subgroup $V \leq G$ was arbitrary, so for all $V$, $\pi(V)$ is somewhere dense. As $G$ is Polish, this implies that $\pi(G)$ is non-meager in $K$ (see, for example, \cite{Kechris2007a}*{Proposition~3.2}). As $\pi(G)$ is also a Borel subgroup of $K$, it must be open and closed \cite{Kechris1995}*{9.11}, implying that $\pi$ is surjective.

  For the last statement, let $K = \varprojlim G/N$, where the inverse limit is taken over all finite index, open, normal subgroups of $G$ directed by reverse inclusion. (Note that, as $G$ is Roelcke precompact, it has only countably many open subgroups, so $K$ is Polish.) Then there is a natural injective homomorphism $\pi \colon G/G^\circ \to K$ with dense image. By the main statement of the lemma, $\pi$ is also surjective, and, therefore, a topological group isomorphism.
\end{proof}

\section{Constructing automorphisms without fixed points} 


\begin{theorem} \label{29} Suppose $M$ is a countable set and $G \leq \Sym(M)$ is closed and locally oligomorphic. Suppose $G^\circ \neq 1$. Then there exist elements $f, g \in G^\circ$  generating a non-abelian free subgroup $F$ of $G^\circ$ which acts freely on $M \setminus \acl(\emptyset)$. 
\end{theorem}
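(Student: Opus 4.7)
The plan is to construct $f,g \in G^\circ$ as unions of a chain of compatible finite partial bijections $\phi_k, \psi_k$ of $M$, each extending to an element of $G^\circ$, via a back-and-forth argument. Write $M_0 = \acl(\emptyset)$ in the $M$-action. Since $G^\circ \subseteq G_{M_0}$, the action of $G^\circ$ preserves $M \setminus M_0$; moreover $G^\circ$ is closed in $\Sym(M)$, and using Lemma~\ref{l:hom-compact}, $G/G^\circ$ is profinite, so every $G$-orbit on $M$ splits into finitely many $G^\circ$-orbits. It follows that $G^\circ$ is itself locally oligomorphic on $M$ and, by the argument of Lemma~\ref{24}, has no finite orbits on $M \setminus M_0$. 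The invariant I would maintain at each stage is that the partial Schreier graph on $M \setminus M_0$ defined by the edges of $\phi_k, \psi_k$ (and their inverses) is a forest --- equivalently, no nontrivial reduced word in $\phi_k^{\pm 1}, \psi_k^{\pm 1}$ returns any element of $M \setminus M_0$ to itself.

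At each stage I would meet one of the standard back-and-forth demands: extend the domain or image of $\phi_k$ or $\psi_k$ to include a prescribed new point of $M$; or add the trivial edges $\phi_k(m) = \psi_k(m) = m$ for a new $m \in M_0$. Whenever a new image or preimage value in $M \setminus M_0$ must be committed --- say $\phi_k(x) = y$ --- I would choose $y$ in the $G^\circ$-orbit of admissible values (those with $(\dom(\phi_{k-1}), x)$ and $(\im(\phi_{k-1}), y)$ having the same type, to preserve extendability) and outside the algebraic closure of all data committed so far. The orbit of admissible values is infinite once the common domain and image of the partial bijections have been pre-enlarged to be algebraically closed in the finitely many $G$-orbits currently in play; this pre-enlargement is a finite step, because $\acl(\cdot)$ meets each $G$-orbit in a finite set. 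Such a separating $y$ is provided by Lemma~\ref{25} applied to the locally oligomorphic action of $G^\circ$ on $M$. A fresh $y$ chosen this way is an isolated vertex in the current Schreier graph, so the new edge closes no cycle, and the forest invariant is preserved.

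In the limit, $f = \bigcup_k \phi_k$ and $g = \bigcup_k \psi_k$ are bijections of $M$; because $G^\circ$ is closed in $\Sym(M)$ and every $\phi_k, \psi_k$ extends to $G^\circ$, one has $f,g \in G^\circ$. The forest property on $M \setminus M_0$ passes to the limit, so no nontrivial reduced word $w$ satisfies $w(f,g)\cdot a = a$ for any $a \in M \setminus M_0$; in particular $w(f,g) \ne 1$. Hence $\langle f, g \rangle$ is freely generated by $f,g$ and acts freely on $M \setminus M_0 = M \setminus \acl(\emptyset)$, as required.

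The main obstacle is verifying that a fresh (algebraically independent) choice is always available at each back-and-forth step while maintaining extendability to $G^\circ$: this combines the pre-enlargement step, the local oligomorphicity of $G^\circ$ for infinitude of the target orbit, and Lemma~\ref{25} for the explicit separation. A secondary, routine issue is the bookkeeping required to meet the back-and-forth demands (ensuring every $z \in M$ eventually lies in the domain and image of both $\phi_k$ and $\psi_k$) while preserving the freshness of every commitment.
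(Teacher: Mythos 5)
Your overall strategy --- a back-and-forth construction of $f,g$ as unions of partial maps extending to elements of $G^\circ$, with the invariant that no non-trivial reduced word has a fixed point (your ``forest'' condition is equivalent to this), and with Lemma~\ref{25} supplying fresh image points --- is the same as the paper's. But there is a genuine gap at the step you call ``pre-enlargement''. Your freshness argument (``a fresh $y$ is an isolated vertex, so the new edge closes no cycle'') only applies to a new domain point $x$ whose admissible images form an infinite orbit over the current image, i.e.\ to points \emph{not} algebraic over the data already committed. The points added during pre-enlargement are, by definition, algebraic over the current domain; once the current image $B_0$ is fixed, their images are confined to the finite set $\acl(B_0)\cap(\text{relevant orbits})$, which is entirely determined and may well meet $\dom(\psi_k)$, $\im(\psi_k)$ or $\dom(\phi_k)$ itself. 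Neumann's Lemma/Lemma~\ref{25} gives you no freedom here, the new edges need not be pendant on fresh leaves, and nothing in your argument rules out that committing these forced values closes a cycle (e.g.\ forces $\phi_k(c)=\psi_k(c)$ for some $c\in\acl(\dom\phi_{k-1})\cap\dom\psi_{k-1}$). You assert the pre-enlargement is ``a finite step'' but never verify it preserves the invariant, and this is exactly where the real work of the theorem lies.

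The paper's proof is organised so that this bad case never arises: the domains and images of the approximations are kept algebraically closed (elements of $\A$) at \emph{every} stage, so one never has to extend over points algebraic over the current domain. An extension step passes from $\vphi\colon A\to B$ with $A,B\in\A$ directly to $\vvphi\colon C\to D$ with $C\in\A$, $C\supseteq A$; since $B$ is algebraically closed, $G_B$ has no finite orbits off $B$ (Lemma~\ref{24}), so Lemma~\ref{25} lets one translate the \emph{entire} new image set to arrange $D\cap(C\cup B\cup A'\cup B')=B$. The residual difficulty --- that the new domain points $C\setminus A$ are prescribed and not fresh --- is then handled by the word-reduction argument of Proposition~\ref{210} (in your graph language: every new edge has its $D\setminus B$ endpoint fresh and of degree one, so the forest survives). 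If you reorganise your construction to always extend over a whole algebraically closed block $C$ with its image chosen \emph{en bloc} by Lemma~\ref{25}, rather than point by point with a separate pre-enlargement phase, your argument goes through and in fact gives a clean alternative to the paper's case analysis on reduced words. A minor further point: insisting that $\phi_k,\psi_k$ fix points of $M_0$ and extend to elements of $G^\circ$ is the right way to get $f,g\in G^\circ$, but note that $G^\circ$ need not equal $G_{\acl(\emptyset)}$ for an arbitrary faithful action on $M$ (Lemma~\ref{28} only gives this for the action on $X(G)$), so ``extends to an element of $G^\circ$'' is the condition you must actually carry through the induction.
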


Suppose that $M$ and $G$ are as in the theorem. Note that by Lemma~\ref{24}, $G^\circ$ is closed and locally oligomorphic on $M\setminus \acl(\emptyset)$. Also, by Lemma~\ref{28}, $(G^\circ)^\circ = G^\circ$. So for the rest of the proof we may assume without loss of generality that $G = G^\circ$ and $\acl(\emptyset) = \emptyset$. Note that the assumption that $G^\circ \neq 1$ means that some $G^\circ$-orbit is infinite. 

We will regard $M$ as a first order structure with automorphism group $G$ (for example, by giving it its canonical structure). 

Let $\A = \{ \acl(E) : E \subseteq M \mbox{ finite}\}$ be the set of algebraic closures of finite subsets of $M$.
By a \textit{partial automorphism} of $M$ we mean a bijection $P \to Q$ between elements of $\A$ which extends to an element of $G$. We build $f,  g$ in the theorem by a back-and-forth argument as the union of a chain of partial automorphisms, $\vphi \colon A \to B$ and $\gamma \colon A' \to B'$ (approximating $f, g$ respectively). It will suffice to show how to extend the domain of one of $\vphi, \gamma$ in the `forth' step (by symmetry, the argument for extending images in the `back' step will be the same).

Consider $F_2 = \langle a, b\rangle$, the free group on generators $a,b$. The non-identity elements of $F_2$ can be thought of as reduced words $\omega(a,b)$ in $a,b$: so expressions $\omega(a,b) = c_1c_2\cdots c_r$ where $c_i \in \{a,b, a^{-1} , b^{-1}\}$ and  $c_i \neq c_{i+1}^{-1}$ for all $i < r$. If $\vphi,\gamma$ are partial automorphisms of $M$ then by $\omega(\vphi, \gamma)$ we mean the composition obtained by substituting $\vphi$ for $a$ and $\gamma$ for $b$ in $\omega(a,b)$. We refer to this as a  \textit{reduced word} in $\vphi, \gamma$. This is a bijection between subsets  of $M$ which extends to an element of $G$  (of course, it could be empty); a fixed point of this is an element $x \in M$ such that $\omega(\vphi, \gamma)x $ is defined and equal to $x$. 

Theorem~\ref{29} follows from the following:

\begin{proposition} \label{210} Suppose $M$ is as above and $\vphi \colon A \to B$, $\gamma \colon A' \to B'$ are partial automorphisms of $M$ such that no reduced word in $\vphi, \gamma$ has a fixed point. Suppose $A \subseteq C \in \A$. Then there is an extension $\vvphi : C \to D$ of $\vphi$ to a partial automorphism with domain $C$  such that no reduced word in $\vvphi, \gamma$ has a fixed point.

\end{proposition}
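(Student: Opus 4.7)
The plan is to build $\vvphi$ as a post-composition $h \circ \psi$, where $\psi \colon C \to D$ is any partial automorphism extending $\vphi$ (obtained by choosing an extension of $\vphi$ in $G$ and restricting to $C$, so that $D = \psi(C) \in \A$ and $B \subseteq D$), and $h \in G_B$ is chosen via Lemma~\ref{25} so that $h(D)$ avoids ``old'' points except those forced to lie in $B$. Concretely, I would set $E = \acl(A \cup A' \cup B \cup B' \cup C)$; this lies in $\A$ and contains $B$, so Lemma~\ref{25} applied to $B \subseteq D$ and $E$ produces $h \in G_B$ with $h(D) \cap E = B$. Defining $\vvphi = h\psi$ then yields a partial automorphism $C \to h(D)$ extending $\vphi$.

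To verify that no reduced word in $\vvphi, \gamma$ has a fixed point I would argue by contradiction. A standard reduction (repeatedly cancelling the outermost letters when $c_r = c_1^{-1}$ while tracking $c_1 y$ as a fixed point of the shortened word) shows that any reduced word with a fixed point gives rise to a non-empty \emph{cyclically reduced} word $\omega = c_r \cdots c_1$ with a fixed point $y$. If this $\omega$ uses no $\vvphi^{\pm 1}$, it is a reduced word in $\gamma$ alone and the hypothesis gives an immediate contradiction. Otherwise, let $y_0, y_1, \ldots, y_r = y_0$ be the trajectory (indices read cyclically). At any index $i$ with $c_i = \vvphi$, the output $y_i$ lies in $h(D)$ and must simultaneously lie in the domain of $c_{i+1}$; cyclic reducedness forbids $c_{i+1} = \vvphi^{-1}$, so this domain is one of $C$, $A'$, $B'$. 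All three are contained in $E$, whence $y_i \in h(D) \cap E = B$. The symmetric argument at indices with $c_i = \vvphi^{-1}$ forces $y_{i-1} \in B$.

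Since $\vvphi|_A = \vphi$ and $\vvphi^{-1}|_B = \vphi^{-1}$, the membership relations just established imply that at every $\vvphi^{\pm 1}$-step of the trajectory one may replace $\vvphi^{\pm 1}$ by $\vphi^{\pm 1}$ without changing the trajectory. Substituting $\vphi^{\pm 1}$ for $\vvphi^{\pm 1}$ in the formal word $\omega$ therefore produces a reduced word in $\vphi, \gamma$ that still fixes $y$, contradicting the hypothesis. The main subtlety of the argument is the choice of $E$: it must include $C$ (the new domain of $\vvphi$), $A'$ and $B'$ (the domain and image of $\gamma$), and also $B$ itself, so that $B \subseteq E$ and the separation provided by Lemma~\ref{25} collapses the intersection $h(D) \cap E$ to exactly $B$ rather than something smaller. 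With $E = \acl(A \cup A' \cup B \cup B' \cup C)$ every occurrence of $\vvphi^{\pm 1}$ in every cyclically reduced word is forced into $A \cup B$ in one uniform step, and the rest is bookkeeping.
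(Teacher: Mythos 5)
Your proof is correct and follows essentially the same route as the paper's: extend $\vphi$ arbitrarily to $C$, use Lemma~\ref{25} to move the image off $C \cup A' \cup B'$ over $B$, and observe that any occurrence of $\vvphi^{\pm1}$ in a word with a fixed point is forced into $B$ (hence agrees with $\vphi^{\pm1}$ there), contradicting the hypothesis on $\vphi,\gamma$. The only real difference is organisational: you cyclically reduce the word first so that every occurrence of $\vvphi^{\pm1}$ is handled uniformly, whereas the paper replaces only the interior occurrences and then disposes of the possible boundary occurrences $\pi_1 = \vvphi$, $\pi_r = \vvphi^{-1}$ by a short case analysis that amounts to the same conjugation trick.
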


\begin{proof} First we show that we can choose $D$ (and $\vvphi$) so that $D \cap (C \cup B\cup A'\cup B') = B$.  As $\vphi$ extends to an element of $G$, there is some partial automorphism $\vphi' : C \to D'$ extending $\vphi$. So $B \subseteq D'$ and by Lemma \ref{25} there is $g \in G_B$ with $gD' \cap (C\cup B \cup A'\cup B') = B$. Let $D = gD'$ and $\vvphi : C \to D$ be the composition $g\circ \vphi'$. This has the required property.

Now we show that this choice of $\vvphi$ works.

\medskip

We first note the following: 

\medskip

\textit{Observation:\/} If $\vvphi^{-1}z$ is defined and $z \in C\cup B \cup A' \cup B'$, then $z \in (C\cup B \cup A' \cup B')\cap D = B$. So $z \in B$ and therefore $\vphi^{-1}z  $ is defined and $\vvphi^{-1}z = \vphi^{-1}z \in A$. Similarly, if $\vvphi w$ is defined and  $\vvphi w \in C \cup B \cup A' \cup B'$ then $\vvphi w \in (C \cup B \cup A' \cup B') \cap D = B$, so $w \in A$, $\vphi w$ is defined and $\vphi w = \vvphi w$. 

\medskip

Now suppose $\pi_1\pi_2 \cdots \pi_r$ is a reduced word in $\vvphi, \gamma$ and $x,y \in M$ are such that
\begin{equation}
  \label{eq:star}
  \pi_1\pi_2 \cdots \pi_r x = y. \tag{$\ast$}
\end{equation}
So $\pi_i \in \{ \vvphi, \vvphi^{-1}, \gamma, \gamma^{-1}\}$ and $\pi_{i+1} \neq \pi_i^{-1}$. We show that most of the terms $\vvphi$, $\vvphi^{-1}$ in this equation can be replaced by $\vphi$, $\vphi^{-1}$ without changing the validity of the equation.

\medskip

\textit{Claim:\/} Suppose $\pi_i = \vvphi^{-1}$ and $i < r$. Then we can replace $\pi_i$ by $\pi_i' = \vphi^{-1}$ in $(*)$. Similarly, if $\pi_j = \vvphi$ and $j > 1$ then we can replace $\pi_j$ by $\pi_j' = \vphi$ in \eqref{eq:star}.

To see this, note that as the word is reduced, $\pi_{i+1}$ is equal to $\gamma, \gamma^{-1}$, or $\vvphi^{-1}$ (as we will want to repeat this argument we also consider the possibility that it is $\vphi^{-1}$). If $z = \pi_{i+1}\ldots\pi_r x$ then $z$ is in the image of $\pi_{i+1}$, so $z \in A'\cup B' \cup C$. By the observation, it follows that $\vphi^{-1}z$ is defined (and equal to $\vvphi^{-1}z$), so we can replace $\pi_i$ by $\vphi^{-1}$ as required. Similarly $\pi_{j-1}$ is equal to $\gamma$, $\gamma^{-1}$ or $\vvphi$ (or $\vphi$). So $w = \pi_{j+1}\ldots \pi_r x$ is such that $\vvphi w$ is defined and in the set $A'\cup B' \cup C$. So by the observation, $\vphi w$ is defined (and equal to $\vvphi w$) and we can make the required replacement. ($\Box_{Claim}$)

\medskip

Now make all of the replacements allowed by the Claim. The only possible $\vvphi^{-1}$ remaining is if $\pi_r = \vvphi^{-1}$ and the only possible $\vvphi$ remaining is if $\pi_1 = \vvphi$.

Thus, after the replacements we have 
\begin{equation}
  \label{eq:starstar}
  \vvphi^s \beta \vvphi^{-t} x = y, \tag{$\ast\ast$}
\end{equation}
where $s, t \in \{0,1\}$ and $\beta$ is a (possibly trivial) reduced word in $\vphi, \gamma$. Suppose, for a contradiction, that $x = y$.

We consider various cases. If $\beta$ is trivial, then exactly one of $s, t$ is $1$ and (possibly after rearranging \eqref{eq:starstar}) we have $\vvphi x = x$. Then $x \in C \cap D$, so $x \in B$ and $\vphi x$ is defined. Thus $\vphi x = x$, contradicting the assumption on $\vphi, \gamma$. Suppose now that $\beta$ is non-trivial. If $s = t = 1$, then we can rearrange \eqref{eq:starstar} to obtain $\beta z = z$ where $z = \vvphi^{-1} x$. As $\beta$ is a non-trivial reduced word in $\vphi, \gamma$, this is a contradiction. We also have a contradiction if $s=t=0$. For the remaining cases, by rearranging \eqref{eq:starstar} if necessary, we can assume $s=1$ and $t=0$, that is, $\vvphi\beta x = x$. Then $x \in D \cap (A\cup B\cup A'\cup B')$, so $x \in B$, $\beta x \in A$ and $\vphi(\beta x)$ is defined, with $\vphi\beta x = x$. But $\vphi\beta$ is a non-trivial reduced word in $\gamma, \vphi$ (it comes from the same word as $\pi_1\ldots\pi_r$) so we have a contradiction.
\end{proof}


\medskip

\begin{proof}[Proof of Theorem~\ref{29}]
 Recall that we are assuming (without loss of generality) that $\acl(\emptyset) = \emptyset$ and $M$ is infinite (the latter from the assumption that $G^\circ \neq 1$).  We build chains of partial automorphisms
\[ \vphi_1 \subseteq \vphi_2 \subseteq \vphi_3 \subseteq \cdots \mbox{ and } \gamma_1 \subseteq \gamma_2 \subseteq \gamma_3 \subseteq \cdots \]
such that $f = \bigcup_i \vphi_i$ and $g = \bigcup_i \gamma_i$ are automorphisms. At each stage we use  Proposition \ref{210}  to extend the domain or image of one of $\vphi_i, \gamma_i$ so that $f, g$ will be automorphisms. We can start off with $\vphi_1, \gamma_1$ so that no reduced word in $\vphi_1, \gamma_1$ has any fixed points. To do this, we just ensure (using Lemma~\ref{25}) that the domains and images of $\phi_1, \gamma_1$ are all disjoint.  Then by the Proposition, the same will be true of all the $\vphi_i, \gamma_i$, and therefore of $f,g$. So no reduced word in $f, g$ has any fixed points: in particular, every reduced word in $f, g$ is not the identity so $f,g$ freely generate a free group whose non-identity elements have no fixed points on $M$.
\end{proof}

\medskip

We can now prove Theorem~\ref{13} from the introduction.

\medskip

\noindent\textit{Proof of Theorem~\ref{13}:\/} Consider $G$ as acting as a closed, locally oligomorphic subgroup of $\Sym(X(G))$, where $X(G)$ is as in Lemma~\ref{19}. Let $f, g$ be as given by Theorem~\ref{29} with $M = X(G)$. Suppose $H \leq G$ is open and of infinite index and $1 \neq k \in F$. By construction of $X(G)$, there is an injective $G$-morphism from the coset space $G/H$ to $X(G)$. As $H$ is of infinite index in $G$, clearly the image of this is in $M\setminus \acl(\emptyset)$. It follows that $k$ has no fixed points on $G/H$, as required. \hfill $\Box$

\medskip

We now obtain Corollary~\ref{14} from the introduction. 

\medskip

\begin{proof}[Proof of Corollary~\ref{14}]
  Suppose $G$ is as in the statement of the Corollary. If $G^\circ = 1$, then by Lemmas~\ref{24} and \ref{28}, every orbit of $G$ is finite. Otherwise, by Theorem~\ref{13} there is a non-abelian free subgroup $F$ of $G$ as in Theorem~\ref{13}. If all orbits of $X$ are infinite, then $F$ acts freely on $X$. But this is impossible if the action of $G$ (and therefore of $F$) on $X$ is amenable.
\end{proof}

\medskip

As a further application of Theorem~\ref{13}, we note the following.

\begin{corollary} Suppose $G$ is a non-archimedean, Roelcke precompact Polish group. Then $G$ is not equal to the union of its open subgroups of infinite index.
\end{corollary}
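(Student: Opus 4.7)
The plan is to deduce the corollary almost immediately from Theorem~\ref{13}, treating the two cases $G^\circ \neq \{1_G\}$ and $G^\circ = \{1_G\}$ separately. The main case uses Theorem~\ref{13} to produce an element avoiding every open subgroup of infinite index, and the degenerate case uses Lemma~\ref{l:hom-compact} to show that no open subgroup of infinite index exists at all.

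Assume first $G^\circ \neq \{1_G\}$. Theorem~\ref{13} supplies elements $f, g \in G^\circ$ generating a free subgroup $F$ with the property that $F$ acts freely on the coset space $G/H$ whenever $H \leq G$ is open and of infinite index. Applying this to the non-identity element $f$ and the identity coset $H \in G/H$, we have $fH \neq H$, i.e., $f \notin H$. As $H$ was an arbitrary open subgroup of infinite index, $f$ witnesses that the union of all such $H$ is a proper subset of $G$.

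Now assume $G^\circ = \{1_G\}$. By Lemma~\ref{l:hom-compact}, $G \cong G/G^\circ$ is a compact profinite group. In a compact topological group every open subgroup has finite index (since its cosets form an open cover), so $G$ has \emph{no} open subgroups of infinite index. The union in question is therefore empty, hence in particular not equal to $G$.

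There is no real obstacle: both cases are handled by results already proved in the paper. The only mild subtlety is remembering to treat the case $G^\circ = \{1_G\}$, where Theorem~\ref{13} does not apply but the statement becomes vacuously stronger thanks to compactness of $G$.
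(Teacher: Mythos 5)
Your proof is correct and follows essentially the same route as the paper: the main case extracts the non-identity element $f$ from the free group of Theorem~\ref{13} and observes that freeness of the action on $G/H$ forces $f \notin H$ for every open $H$ of infinite index. Your explicit treatment of the degenerate case $G^\circ = \{1_G\}$ (via compactness of $G \cong G/G^\circ$ from Lemma~\ref{l:hom-compact}) is a welcome bit of extra care that the paper leaves implicit.
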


\begin{proof} Open subgroups of infinite index are exactly the stabilizers of  elements of $X(G)\setminus \acl(\emptyset)$. So the statement follows once we know that some element of $G$ fixes no  element of $X(G)\setminus \acl(\emptyset)$. But we just showed that there is a free group of rank 2 with this property, which is more than enough.
\end{proof}

Note that very little is used in the proof of Theorem~\ref{29} apart from Neumann's Lemma, in the form of Lemma~\ref{25}. As the corresponding result holds for countable, saturated structures (Lemma~\ref{22}) or, more generally, countable multi-sorted structures which are saturated as multi-sorted structures, the proof of Theorem~\ref{29} also gives the following.

\begin{corollary} \label{maincor} Suppose $M$ is a countable saturated structure with some infinite sort. Then there exist $f, g \in \Aut(M/\acl(\emptyset))$ such that $F = \langle f, g\rangle$ is the free group on $f, g$ and every non-identity element of $F$ fixes no elements of $M\eq \setminus \acl\eq(\emptyset)$.\hfill$\Box$
\end{corollary}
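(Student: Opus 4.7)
The plan is to run the proof of Theorem~\ref{29} verbatim, with $M\eq$ (regarded as a countable many-sorted structure) playing the role of $M$, with $\Aut(M\eq)$ playing the role of $G$, and with Lemma~\ref{22} in its multi-sorted form replacing Lemma~\ref{25}. The remark preceding the statement already signals that the proof of Theorem~\ref{29} used essentially nothing beyond Neumann's Lemma in its separation form, so the task is to confirm that each input carries over and then invoke the previous argument.

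First I would reduce to the case $\acl\eq(\emptyset) = \emptyset$. Set $G = \Aut(M\eq/\acl\eq(\emptyset))$; an element of $G$ restricts to an element of $\Aut(M/\acl(\emptyset))$ since $\acl(\emptyset) \sub \acl\eq(\emptyset)$, so it suffices to produce $f,g \in G$ freely generating a free group all of whose non-identity elements act without fixed points on $M\eq \setminus \acl\eq(\emptyset)$. The hypothesis that some sort of $M$ is infinite, together with saturation, guarantees that $G$ has an infinite orbit on $M\eq \setminus \acl\eq(\emptyset)$; this is the analogue of the assumption $G^\circ \neq 1$ used in Theorem~\ref{29} and follows from the many-sorted form of Lemma~\ref{22} (or directly from a back-and-forth argument using saturation).

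Next I would carry out the back-and-forth construction of the proof of Theorem~\ref{29}. A \emph{partial automorphism} is a bijection $P \to Q$ between algebraic closures (in $M\eq$) of finite subsets of $M\eq$ which extends to an element of $G$; the family $\A$ of such closures is closed under the finite unions we need. Proposition~\ref{210} is then established as before: given partial automorphisms $\vphi\colon A\to B$ and $\gamma\colon A'\to B'$ with no reduced word having a fixed point, and $A \sub C \in \A$, first extend $\vphi$ arbitrarily to some $\vphi'\colon C\to D'$, then apply the many-sorted form of Lemma~\ref{22} to find $g \in \Aut(M\eq/B)$ with $gD'\cap(C\cup B\cup A'\cup B') = B$, and set $\vvphi = g\circ\vphi'$. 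The purely combinatorial word-replacement argument in the proof of Proposition~\ref{210} then works unchanged. Chaining countably many such extensions produces $f,g$ with the required property, exactly as in the proof of Theorem~\ref{29}.

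The only nontrivial points to check, and therefore the main obstacle, are the bookkeeping items: that $M\eq$ really is saturated as a many-sorted structure (so Lemma~\ref{22} applies in its multi-sorted form and $\acl\eq$ equals a union of finite $\Aut(M\eq/E)$-orbits for finite $E$), and that the restriction map $\Aut(M\eq)\to\Sym(M\eq)$ has the right image on each sort for the back-and-forth enumeration to cover all of $M\eq$. Both are standard; once they are in place, the entire argument for Theorem~\ref{29} transfers without further modification and delivers the corollary.
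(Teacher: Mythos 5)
Your proposal is correct and follows exactly the route the paper intends: the paper gives no separate proof of Corollary~\ref{maincor} beyond the remark immediately preceding it, which says precisely that the proof of Theorem~\ref{29} goes through with the multi-sorted form of Lemma~\ref{22} (saturation of $M\eq$ as a many-sorted structure) replacing Lemma~\ref{25}. Your write-up simply makes explicit the bookkeeping the paper leaves implicit, including the correct observation that an infinite sort in a countable saturated structure must contain a non-algebraic element, so the analogue of the hypothesis $G^\circ \neq 1$ holds.
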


\begin{example} We give an example of a countable (non-saturated) structure with a rich automorphism group for which the above corollary fails. Let $L$ be a language with countably many binary relation symbols $(E_i : i \in \N)$. Consider the class $\C$ of finite $L$-structures $A$ where each $E_i$ is an equivalence relation on $A$ and only finitely many of the $E_i$ are not the universal relation $A^2$ on $A$. Then $\C$ has countably many isomorphism types and it is easy to check that it is a Fra\"{\i}ss\'e amalgamation class. Let $M$ be the Fra\"{\i}ss\'e limit. Then $M$ is a countable, homogeneous $L$-structure. 

As $M$ is constructed as the union of a chain of finite structures in $\C$, if $a,b \in M$ then there exists $n$ such that $E_n(a,b)$. In particular, $M$ is not saturated. If $g \in \Aut(M)$ and $a \in M$, then $E_n(a, ga)$ for some $n$, therefore $g$ fixes the $E_n$-class which contains $a$. It is easy to see that the $E_n$-classes are non-algebraic elements of $M\eq$, therefore every automorphism of $M$ fixes a non-algebraic element of $M\eq$. In particular, $\Aut(M)$ is the union of proper open subgroups of infinite index. 

Note that, of course, Neumann's Lemma fails for $M\eq$ in this example: $\acl\eq(a)$ contains  the $E_n$-equivalence classes of $a$ and we have just observed that there is no automorphism which moves this to a set disjoint from it (over $\acl\eq(\emptyset)$).
\end{example}

\section{Property (T) for non-archimedean, Polish, Roelcke precompact groups}

The book \cite{Bekka2008} is a convenient reference for the background to this section.

\medskip

A \textit{unitary representation} of a topological group $G$ is a homomorphism $\pi \colon G \to U(\mathcal{H})$ to the unitary group of some Hilbert space $\mathcal{H}$ which is strongly continuous, meaning that for every $\xi \in \mathcal{H}$ the map $G \to \mathcal{H}$ given by $g \mapsto \pi(g)\xi$ is continuous. If $H$ is an open subgroup of $G$, consider the action of $G$ on the coset space $Y = G/H$. Then $Y$ is a discrete space and the action of $G$ on $Y$ is continuous. It is easy to check that the corresponding action of $G$ on $\ell^2(Y)$ gives a unitary representation of $G$ (called the \textit{quasi-regular representation} $\lambda_{G/H}$).

From \cite{Tsankov2012}, we have the following:

\begin{theorem} \label{12} Suppose $G$ is a non-archimedean, Roelcke precompact, Polish group. Then every unitary representation of $G$  is a direct sum of irreducible unitary representations. Moreover, every irreducible unitary representation of $G$ is a subrepresentation of $\ell^2(G/H)$, for some open subgroup $H$ of $G$.
\end{theorem}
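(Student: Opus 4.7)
The plan is to build everything around the Hecke-algebra machinery, which becomes drastically simpler under Roelcke precompactness. For each open subgroup $V \le G$, let $\mathfrak{H}_V$ denote the convolution $*$-algebra of finitely supported, bi-$V$-invariant complex functions on $G$, with involution $f^*(g) = \overline{f(g^{-1})}$; a basis is given by the characteristic functions of the double cosets in $V \backslash G / V$, a finite set by Roelcke precompactness. For any unitary representation $\pi \colon G \to U(\mathcal{H})$, summing $\pi(g)$ against elements of $\mathfrak{H}_V$ gives a $*$-action of $\mathfrak{H}_V$ on the closed subspace $\mathcal{H}^V$ of $V$-fixed vectors. Acting faithfully on $\ell^2(G/V)^V$, the algebra $\mathfrak{H}_V$ is a finite-dimensional $C^*$-algebra, hence semisimple, so any $\mathfrak{H}_V$-module splits into an orthogonal sum of isotypical components, each further decomposing into copies of one of the finitely many simple modules.

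The heart of the argument is the following spherical correspondence: taking $V$-fixed vectors induces a bijection between isomorphism classes of irreducible unitary representations $\sigma$ with $\sigma^V \ne 0$ and isomorphism classes of simple $\mathfrak{H}_V$-modules. I would prove that for irreducible $\sigma$, $\sigma^V$ is either zero or a simple $\mathfrak{H}_V$-module; and conversely, starting from any simple $\mathfrak{H}_V$-submodule $S$ of $\pi^V$, the closed $G$-invariant subspace $\overline{\pi(G) S}$ is irreducible with $V$-fixed part exactly $S$. The main obstacle is the irreducibility of $\overline{\pi(G) S}$: one has to rule out proper closed $G$-invariant subspaces with trivial $V$-fixed part. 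This is where the finite-dimensionality and semisimplicity of $\mathfrak{H}_V$ are indispensable, together with the key smoothness statement that every nonzero closed $G$-invariant subspace of a unitary $G$-module has a nonzero $V$-fixed vector for some sufficiently small open $V$; this smoothness is itself a nontrivial consequence of Roelcke precompactness, via a weak-compactness argument for the orbit of a vector inside $\mathcal{H}$.

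Granted the correspondence, both halves of the theorem fall out. For the embedding assertion, any irreducible $\sigma$ has $\sigma^V \ne 0$ for some open $V$ by the smoothness statement, and $\ell^2(G/V)^V$ carries the left regular representation of $\mathfrak{H}_V$, which contains every simple $\mathfrak{H}_V$-module as a submodule; reversing the correspondence produces a $G$-equivariant isometric embedding $\sigma \hookrightarrow \ell^2(G/V)$. For complete reducibility, I would apply Zorn's lemma to produce a maximal closed $G$-invariant subspace $\mathcal{K} \le \mathcal{H}$ that is an orthogonal direct sum of irreducible subrepresentations. If $\mathcal{K}^\perp$ were nonzero, smoothness gives $(\mathcal{K}^\perp)^V \ne 0$ for some open $V$; it contains a simple $\mathfrak{H}_V$-submodule, yielding an irreducible $G$-subrepresentation of $\mathcal{K}^\perp$ by the correspondence, and contradicting maximality.
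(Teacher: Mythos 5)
Your plan is far more ambitious than what the paper does --- the paper simply cites the classification of irreducibles from \cite{Tsankov2012}*{Theorem~4.2} ($\pi \cong \Ind_K^G(\sigma)$ with $K$ open and $\sigma$ factoring through a finite quotient $K/H$) and then observes that $\pi \leq \Ind_K^G(\Ind_H^K(1_H)) = \ell^2(G/H)$ --- and your from-scratch sketch has two genuine gaps, both stemming from the fact that $G$ is not locally compact, so an open subgroup $V$ is not compact and a double coset $VgV$ is in general an \emph{infinite} union of left cosets of $V$. First, the object $\mathfrak{H}_V$ does not exist as you describe it: a nonzero bi-$V$-invariant function is constant on each double coset, hence never finitely supported, and the convolution of two double-coset indicators diverges (its structure constants would count left cosets inside a double coset). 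Relatedly, your two descriptions of $\mathfrak{H}_V$ are incompatible: $\ell^2(G/V)^V$ is spanned by the indicators of the \emph{finite} double cosets only, so for $G = \Aut(\Q,\leq)$ and $V$ a point stabilizer it is one-dimensional while there are three double cosets; no three-dimensional algebra acts faithfully on it. The workable substitute (essentially what \cite{Tsankov2012} uses) is, for each representation $\pi$, the linear span of the compressions $P_V\pi(g)P_V|_{\mathcal{H}^V}$: it is finite-dimensional because these depend only on $VgV$, and it is closed under multiplication because $P_V$ lies in the weak closure of the convex hull of $\pi(V)$ while $VgVhV$ meets only finitely many double cosets --- this is where Roelcke precompactness genuinely enters. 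Your ``smoothness'' statement, by contrast, is fine but needs no Roelcke precompactness at all: for a small open subgroup $V$ the orbit $\pi(V)\xi$ lies in a ball of radius $\eps < \norm{\xi}$ about $\xi$, so the minimal-norm point of its closed convex hull, which is $P_V\xi$, is nonzero.

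Second, and more seriously, the last step of your embedding argument is false. It is not true that every irreducible $\sigma$ with $\sigma^V \neq 0$ embeds into $\ell^2(G/V)$: the trivial representation of $\Aut(\Q,\leq)$ has nonzero $V$-fixed vectors for $V$ a point stabilizer, yet $\ell^2(G/V) = \ell^2(\Q)$ has no nonzero invariant vector. What fails is Frobenius reciprocity: the natural intertwiner $v \mapsto \bigl(gV \mapsto \langle v, \sigma(g)\xi\rangle\bigr)$ attached to $\xi \in \sigma^V$ need not take values in $\ell^2$, precisely because $G/V$ is not a finite union of $V$-orbits; equivalently, $\ell^2(G/V)^V$ is \emph{not} the regular module of the (correctly defined) Hecke algebra and does not contain every simple module. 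So the subgroup $H$ in the statement cannot be an arbitrary $V$ with $\sigma^V \neq 0$; one needs the finer structural fact that $\sigma$ is induced from a representation factoring through a \emph{finite} quotient $K/H$, which is exactly the content of \cite{Tsankov2012}*{Theorem~4.2} that the paper's proof invokes.
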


\begin{proof} The first statement is part of \cite{Tsankov2012}*{Theorem~4.2}. The rest of the proof is similar to that of \cite{Tsankov2012}*{Proposition 6.2}. If $\pi : G \to U(\mathcal{H})$ is an irreducible unitary representation of $G$, then by \cite{Tsankov2012}*{Theorem~4.2}, $\pi$ is isomorphic to an induced representation $\Ind_K^G(\sigma)$ for some open subgroup $K$ of $G$ and irreducible representation $\sigma$ of $K$ which factors through a finite quotient $K/H$ of $K$. In particular, $\pi$ is a subrepresentation of $\Ind_K^G(\Ind_H^K(1_H))$ (where $1_H$ is the trivial representation of $H$), which is the same thing as $\ell^2(Y)$ with $Y = G/H$.
\end{proof}

We now recall the definition of property (T) for topological groups.

\begin{definition} \label{42} Suppose $G$ is a topological group, $Q\subseteq G$ and $\eps > 0$. If $\pi \colon G \to U(\mathcal{H})$ is a unitary representation of $G$, we say that a non-zero vector $\xi \in \mathcal{H}$ is \textit{$(Q,\eps)$-invariant} (for $\pi$) if 
$\sup_{x \in Q}\norm{\pi(x)\xi - \xi} < \eps \norm{\xi}$.  

We say that $(Q,\eps)$ is a \textit{Kazhdan pair}  if for every unitary representation $\pi$ of $G$, if $\pi$ has a $(Q,\eps)$-invariant vector, then it has a (non-zero) invariant vector. We say that $G$ has \textit{Kazhdan's property (T)} (respectively, \textit{strong property (T)}) if there is a Kazhdan pair $(Q,\eps)$ with $Q$ compact (respectively, finite).
\end{definition}

The following fact will be useful (see Proposition~1.7.6 and Remark~1.7.9 in \cite{Bekka2008}).
\begin{lemma}
  \label{l:short-exact}
  Let $G$ be a completely metrizable group and $N \lhd G$ a closed normal subgroup. If both $N$ and $G/N$ have property (T), then so does $G$.
\end{lemma}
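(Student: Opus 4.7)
The plan is to use the characterization of property (T) in terms of almost invariant vectors: $G$ has property (T) if and only if for every unitary representation $\pi \colon G \to U(\mathcal{H})$, the existence of almost invariant vectors (i.e.\ a $(Q,\eps)$-invariant vector for every compact $Q \sub G$ and $\eps > 0$) forces the existence of a non-zero $G$-invariant vector. So I would fix such a $\pi$ with almost invariant vectors and try to produce a non-zero invariant vector, using the hypotheses on $N$ and $G/N$ in turn.

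First I would restrict $\pi$ to $N$. Since restriction obviously preserves the property of having almost invariant vectors, property (T) for $N$ guarantees that the closed subspace $\mathcal{H}^N = \{\xi \in \mathcal{H} : \pi(n)\xi = \xi \text{ for all } n \in N\}$ is non-zero. Because $N$ is normal, $\mathcal{H}^N$ is $G$-invariant, so its orthogonal complement $(\mathcal{H}^N)^\perp$ is also $G$-invariant, and the orthogonal projection $P$ onto $\mathcal{H}^N$ commutes with $\pi(g)$ for every $g \in G$. The crucial technical input is the following quantitative version of property (T) for $N$ (Proposition~1.1.9 in \cite{Bekka2008}): for every $\delta > 0$ there is a compact Kazhdan pair $(Q_\delta, \eps_\delta)$ for $N$ such that any $(Q_\delta, \eps_\delta)$-invariant unit vector $\xi$ in any unitary representation of $N$ satisfies $\norm{P\xi} \geq 1 - \delta$, where $P$ is projection onto the space of $N$-invariant vectors. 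Applying this in our setting to a vector $\xi$ which is simultaneously $(K \cup Q_\delta, \min(\eta,\eps_\delta))$-invariant for $G$ (such $\xi$ exist by hypothesis on $\pi$, for arbitrary compact $K \sub G$ and $\eta > 0$), and using that $P$ commutes with $G$, I obtain that $P\xi \in \mathcal{H}^N$ is a non-zero vector which is $(K, \eta/(1-\delta))$-invariant for $G$. Letting $K$ and $\eta$ vary shows that $\mathcal{H}^N$, as a unitary $G$-representation, has almost invariant vectors.

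Since $N$ acts trivially on $\mathcal{H}^N$, this $G$-action factors through the continuous quotient $G/N$; hence $\mathcal{H}^N$ has almost invariant vectors for $G/N$ as well (this uses that the continuous image of a compact Kazhdan set is compact). Applying property (T) of $G/N$, there is a non-zero $G/N$-invariant vector in $\mathcal{H}^N$, which is by construction a non-zero $G$-invariant vector in $\mathcal{H}$, completing the argument. The main subtle point, which will require the quantitative form of property (T) for $N$, is verifying that projecting almost invariant vectors onto $\mathcal{H}^N$ does not annihilate them, so that the almost invariance survives the passage from $\pi$ to its subrepresentation on $\mathcal{H}^N$; the complete metrizability of $G$ plays no serious role beyond ensuring that the relevant characterizations of property (T) are available in this generality.
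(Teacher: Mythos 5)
The paper itself does not prove this lemma; it simply cites Proposition~1.7.6 and Remark~1.7.9 of \cite{Bekka2008}, and your argument is in essence the standard proof given there: restrict to $N$, use the quantitative form of property (T) for $N$ to see that the projection onto $\mathcal{H}^N$ does not kill almost invariant vectors, observe that $\mathcal{H}^N$ is $G$-invariant and that the $G$-action on it factors through $G/N$, and finish with property (T) for $G/N$. That analytic core is correct. However, as written there are two genuine gaps, both concentrated in the packaging rather than in the estimates.

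First, you phrase everything in terms of the characterization ``almost invariant vectors force invariant vectors,'' and at the end you only establish that characterization for $G$. With the paper's Definition~4.2, property (T) means the existence of a \emph{compact} Kazhdan pair, and the equivalence of the two formulations is a theorem only for $\sigma$-compact locally compact groups (via the countable-direct-sum trick); the groups relevant here, such as closed oligomorphic subgroups of $\Sym(X)$, are Polish but not $\sigma$-compact, and their compact subsets have no countable cofinal family, so the usual proof of the equivalence breaks down. The fix is to run exactly your argument but with fixed compact Kazhdan pairs: take $(Q_1,\eps_1)$ for $N$, $(\cl{Q_2},\eps_2)$ for $G/N$, lift $\cl{Q_2}$ to a compact $Q_2 \sub G$ with $p(Q_2) \supseteq \cl{Q_2}$, and check that $(Q_1 \cup Q_2, \eps)$ is a Kazhdan pair for $G$ for suitable $\eps>0$, using your projection estimate. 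Second, that lifting step is the real content of the passage from $G$-almost-invariance to $G/N$-almost-invariance, and your parenthetical justification points the wrong way: what is needed is not that continuous images of compact sets are compact, but that compact subsets of $G/N$ are covered by images of compact subsets of $G$. This is where complete metrizability actually enters (one builds a finitely branching tree of points of $G$ using openness of the quotient map and a complete metric, and takes limits along branches), so your closing remark that completeness ``plays no serious role'' is not accurate. Both repairs use only ingredients already present in your write-up.
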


In \cite[Theorem~6.6]{Tsankov2012},  it was shown that automorphism groups of certain $\omega$-categorical structures (those without algebraicity and with weak elimination of imaginaries) have property (T). Furthermore, \cite[Theorem~6.7]{Tsankov2012} gave some examples of $\omega$-categorical structures $M$ whose automorphism groups $G$ have strong property (T). In the latter case, the proof proceeded by exhibiting a free action of a non-abelian free group on the structure. We use the same idea, together with Theorem~\ref{13}  to prove the second part of Theorem~\ref{11}: if $X$ is countable and $G \leq \Sym(X)$ is closed and Roelcke precompact, then $G^\circ$ has strong property (T). This generalises the results in \cite{Tsankov2012}. 

\medskip

\begin{proof}[Proof of Theorem~\ref{11}]

  Suppose $G$ is a non-archimedean, Roelcke precompact Polish group. By Lemma~\ref{l:hom-compact}, the quotient group $G/G^\circ$ is compact and therefore has property (T). In view of Lemma~\ref{l:short-exact}, to prove the theorem, it remains to show that $G^\circ$ has property (T). By Theorem~\ref{13}, there exists a set $Q = \{f_1,f_2\} \sub G^\circ$ which generates a non-abelian free subgroup $F$ of $G^\circ$ with the property that if $H$ is a proper, open subgroup of $G^\circ$, then $F$ acts freely on the coset space $G^\circ/H$. Following an argument similar to the one in \cite{Bekka2003}, we show that $(Q, \sqrt{2-\sqrt{3}})$ is a Kazhdan pair for $G^\circ$. By Theorem~\ref{12}, it suffices to show that for any proper open subgroup $H \leq G^\circ$ and all $\xi \in \ell^2(G^\circ/H)$, 
\begin{equation} \label{eq:1}
  \max_{i=1,2} \, \norm{\pi(f_i) \cdot \xi - \xi} \geq \sqrt{2-\sqrt{3}} \, \norm{\xi}.
\end{equation}
By Theorem~\ref{29}, the restriction of $\pi$ to $F$ is a direct sum of copies of the left-regular representation of $F$ and Kesten's theorem \cite{Kesten1959} tells us that
  \[
  \norm{\pi(f_1) + \pi(f_1^{-1}) + \pi(f_2) + \pi(f_2^{-1})} = 2 \sqrt{3}.
  \]
  A simple calculation using the Cauchy--Schwartz inequality (see \cite{Bekka2003}*{pp. 515--516} for details) yields
  \[
  \sum_{i=1}^2 \norm{\pi(f_i) \cdot \xi - \xi}^2 \geq 4 - 2\sqrt{3},
  \]
thus proving \eqref{eq:1}.
\end{proof}

\bibliography{mybiblio}

\end{document}